\newcommand{\BM}[1]{\color{black}{#1}\color{black}\xspace}
\def\BMe{\color{black}}
\def\Ascr{\mathscr{A}}
\begin{document}

 \newtheorem{thm}{Theorem}
 \numberwithin{thm}{section}
 \newtheorem{cor}[thm]{Corollary}
 \newtheorem{lem}[thm]{Lemma}{\rm}
 \newtheorem{prop}[thm]{Proposition}

 \newtheorem{defn}[thm]{Definition}{\rm}
 \newtheorem{assumption}[thm]{Assumption}
 \newtheorem{rem}[thm]{Remark}
 \newtheorem{ex}[thm]{Example}
\numberwithin{equation}{section}

 \def\im{\mathbf{i}}
\def\la{\langle}
\def\ra{\rangle}
\def\glexe{\leq_{gl}\,}
\def\glex{<_{gl}\,}
\def\e{{\rm e}}

\def\x{\mathbf{x}}
\def\P{\mathbf{P}}
\def\h{\mathbf{h}}
\def\by{\mathbf{y}}
\def\bz{\mathbf{z}}
\def\F{\mathcal{F}}
\def\R{\mathbb{R}}
\def\T{\mathbf{T}}
\def\N{\mathbb{N}}
\def\D{\mathbf{D}}
\def\V{\mathbf{V}}
\def\U{\mathbf{U}}
\def\K{\mathbf{K}}
\def\Q{\mathcbf{Q}}
\def\H{\mathbf{H}}
\def\M{\mathscr{M}}
\def\oM{\overline{\mathscr{M}}}
\def\O{\mathbf{O}}
\def\C{\mathbb{C}}
\def\P{\mathbf{P}}
\def\Z{\mathbb{Z}}
\def\H{\mathcal{H}}
\def\A{\mathbf{A}}
\def\V{\mathbf{V}}
\def\AA{\overline{\mathbf{A}}}
\def\B{\mathbf{B}}
\def\c{\mathbf{C}}
\def\L{\mathcal{L}}
\def\Ec{\mathcal{E}}
\def\bS{\mathbf{S}}
\def\H{\mathcal{H}}
\def\I{\mathbf{I}}
\def\Y{\mathbf{Y}}
\def\X{\mathbf{X}}
\def\G{\mathbf{G}}
\def\f{\mathbf{f}}
\def\z{\mathbf{z}}
\def\v{\mathbf{v}}
\def\y{\mathbf{y}}
\def\d{\hat{d}}
\def\bx{\mathbf{x}}
\def\bI{\mathbf{I}}
\def\y{\mathbf{y}}
\def\g{\mathbf{g}}
\def\w{\mathbf{w}}
\def\b{\mathbf{b}}
\def\a{\mathbf{a}}
\def\u{\mathbf{u}}
\def\q{\mathbf{q}}
\def\e{\mathbf{e}}
\def\s{\mathcal{S}}
\def\cc{\mathcal{C}}
\def\co{{\rm co}\,}
\def\tg{\tilde{g}}
\def\tx{\tilde{\x}}
\def\tg{\tilde{g}}
\def\tA{\tilde{\A}}

\def\fc{\mathfrak{d}}
\def\supmu{{\rm supp}\,\mu}
\def\supp{{\rm supp}\,}
\def\cd{\mathcal{C}_d}
\def\cok{\mathcal{C}_{\K}}
\def\cop{COP}
\def\vol{{\rm vol}\,}
\def\blue{\color{blue}}
\def\red{\color{red}}
\def\bo{\boldsymbol{\lambda}}
\def\T{\mathbb{T}}
\def\AA{\mathbb{A}}
\def\CC{\mathbb{C}}
\def\NN{\mathbb{N}}
\def\RR{\mathbb{R}}
\def\zb{\mathbf{z}}
\def\im{\mathbf{i}}

\title{Sparse polynomial interpolation: sparse recovery, super resolution, or Prony?
}


\author{C\'edric Josz         \and
        Jean Bernard Lasserre \and 
Bernard Mourrain
}




\institute{C\'edric Josz \at
              LAAS-CNRS, 7 avenue du Colonel Roche, BP 54200 \\
	     31031 Toulouse C\'edex 4, France\\
              \email{cedric.josz@gmail.com}           
           \and
           Jean Bernard Lasserre \at
           LAAS-CNRS and Institute of Mathematics\\
           7 avenue du Colonel Roche, BP 54200\\
           31031 Toulouse C\'edex 4, France
           \email{lasserre@laas.fr}
	\and
	Bernard Mourrain\\
INRIA, 2004 route des Lucioles\\
 06902 Sophia Antipolis, France \\
\email{Bernard.mourrain@inria.fr}
}

\date{}
\maketitle

\begin{abstract}
We show that the sparse polynomial interpolation problem reduces to
a discrete super-resolution problem on the $n$-dimensional torus. Therefore the semidefinite programming 
approach initiated by Cand\`es \& Fernandez-Granda \cite{candes_towards_2014} 
in the univariate case can be applied. We extend their result to the multivariate case, i.e., we show that exact recovery is guaranteed provided that a geometric spacing condition on the €œsupports€ holds and the number of evaluations are sufficiently many (but not many). It also turns out that
the sparse recovery LP-formulation  of $\ell_1$-norm minimization 
is also guaranteed to provide exact recovery {\it provided that} the
evaluations are made in a certain manner and even though the
Restricted Isometry Property for exact recovery is not satisfied. (A
naive sparse recovery LP-approach does not offer such a guarantee.)
Finally we also describe the algebraic Prony method for sparse interpolation, which also recovers the exact decomposition but from  
less point evaluations and with no geometric spacing condition. We provide two sets of numerical experiments, one in which the super-resolution technique and Prony's method seem to cope equally well with noise, and another in which the super-resolution technique seems to cope with noise better than Prony's method, at the cost of an extra computational burden (i.e. a semidefinite optimization).
\keywords{Linear programming \and Prony's method \and Semidefinite programming \and super-resolution}
\end{abstract}

\section{Introduction}
\label{intro}
In many domains, functions can be described in a way which is easy to evaluate, but not
necessarily easy to identify. This can be the case when the function
comes from the analysis of the input-ouput response of a complex
system or from an algorithmic construction.
Interpolation strategies have shown to be very effective in the
reconstruction of such {\it black-box} functions, in particular in
computer algebra, for sparse multivariate polynomials. 
Such black-box polynomials may be built with ``approximate''
coefficients, so that the evaluation at a point may be an approximate
value with error or noise. While efficient exact methods exist for
the interpolation of sparse polynomials, interpolation of approximate
sparse multivariate polynomials remains a challenging problem.

A motivation of this work is to show that the sparse
interpolation problem can be solved exactly under some conditions of
separability of the support, by three different methods following
different perspectives. One of them is a direct algebraic method and
the two others are based on convex optimization tools (LP, SDP). 
We analyze them in detail and investigate their numerical robustness and their efficiency to address the
interpolation problem of black-box sparse multivariate polynomials.

Suppose that we are given a {\it black-box} polynomial $g\in\R[\z]$,
that is, $g$ is unknown but given any ``input'' point $\z\in\C^n$, the black-box outputs the complex number $g(\z)$.
{We assume that the polynomial $\z\mapsto
g(\z)=\sum_{\alpha} g_{\alpha} \z^{\alpha}$ is {\it sparse}, that is,
it has only a few number $r$ of non-zero terms, compared to the number of monomials of degree less or equal to the degree of $g$}.
{\it Sparse interpolation} is concerned with recovering the unknown monomials
$(\z^{\alpha})$ and coefficients $(g_\alpha)$ of a sparse polynomial
{in a way, which depends on the number $r$ of non-zero terms of $g$}. In the sole knowledge of a few (and as few as possible) values of $g$ at some points
$(\z_k)\subset\C^n$ that one may choose at our convenience, {one want
to recover the $r$ non-zero terms of the polynomial $g$}.

\BM{Hereafter, we present three families of methods
for robust sparse interpolation, using either direct algebraic computation or
convex optimization. Direct algebraic methods shall compute sparse representation using a
minimal number of values. Convex optimization techniques shall help improving
robustness, in the presence of numerical errors. All of these methods allow to choose the points of evaluation.
}

\subsection*{Prony}
The method goes back to the pioneer  work of G. R de Prony \cite{baron_de_prony_essai_1795} who was interested in recovering  a sum of few exponential terms from sampled values of the function. Thus Prony's method
is also a standard tool to recover a complex atomic measure from
knowledge of some of its moments \cite{kunis_multivariate_2016}. Briefly, in the univariate setting
this purely algebraic method consists of two steps: (i) Computing 
the coefficients of a polynomial $p$ whose roots form the finite
support of the unknown measure. As $p$ satisfies a recurrence relation 
it is the unique element (up to scaling) in the kernel of a (Hankel) matrix. (ii) The weights associated to the atoms of the support
solve a Vandermonde system.

This algebraic method has then been used in the context of sparse polynomial interpolation.
In the univariate case it consists
in evaluating the black-box polynomial at values of the form $\varphi^{k}$ for a finite number of  
pairs $(k,\varphi)\in \N\times\C$, fixed.
A sequence of $2\,r$ evaluations allows to recover the decomposition exactly, where $r$ is the number of terms of the sparse polynomial.
The decomposition is obtained by computing a minimal recurrence relation between these evaluations, by finding the roots of the associate polynomial, which yields the exponents of the monomials and by solving a Vandermonde system which yields the coefficients of the terms in the sparse polynomial.

Since then, it has been extended to address numerical issues and to
treat applications in various domains, particularly in signal processing.
\BM{Methods such as MUSIC, ESPRIT extend the initial method of Prony,
by adding robust numerical linear algebra ingredients}.
See e.g. {\cite{roy_esprit-estimation_1989}}, {\cite{swindlehurst_performance_1992}},
{\cite{golub_separable_2003}}, {\cite{beylkin_approximation_2005}}, \cite{potts_nonlinear_2011}, \cite{pereyra_exponential_2012} and the many references therein. 

The approach is closely related to sparse Fast Fourier Transform
techniques, where evaluations at powers of the $N$-th root of unity
are used to recover a $r$-sparse signal. The bounds, in the univariate
case, on the number of samples and runtime complexity are linear in $r$
up to polylog factors in $N$ or $r$. See e.g. \cite{CandesNearOptimalSignalRecovery2006}, \cite{HassaniehNearlyoptimalsparse2012}.

From an algorithmic point of view, the approach has been improved
by exploiting the Berlekamp-Massey algorithm \cite{berlekamp_nonbinary_1968},  \cite{massey_shift-register_1969} and the structure of the involved matrices; see e.g. \cite{kaltofen_improved_1989}, \cite{zippel_interpolating_1990}, \cite{grigoriev_fast_1990}.

Prony's approach has also been applied to treat multivariate sparse
interpolation problems \cite{zippel_probabilistic_1979} and
\cite{ben-or_deterministic_1988}\BM{, by evaluation at points with
coordinates in geometric progressions}.
It has also been extended  to approximate data
\cite{giesbrecht_symbolicnumeric_2009}\BM{, using the same type of
  point evaluation sequences}. It has been 
applied to sparse polynomial interpolation 
with noisy data in \cite{comer2012} 
to provide a way to recover a 
blackbox univariate polynomial exactly when some (but not all) of its evaluations are corrupted with noise (in the sipirit of error-decoding). 
 
Generalizations of Prony's method to multivariate reconstruction
problems have been developed more recently. 
\BM{In \cite{kunis_multivariate_2016}, a projection based method is used to
compute univariate polynomials which roots determine the coordinates
of the terms in the sparse representation.
In \cite{sauer_pronys_2016-1}, an $H$-basis of the
kernel ideal of a moment matrix is computed and used to find the roots
which determine the sparse decomposition.
Direct decomposition methods which compute
the algebraic structure of the  Artinian Gorenstein
algebra associated to the moment matrix and deduce the sparse
representation from eigenvectors of multiplication operators
are developed in \cite{mourrain_polynomial-exponential_2016} and
\cite{harmouch_structured_2017}.
}

\subsection*{Sparse recovery}
``Sparse recovery'' refers to methods for estimating a sparse
representation from solutions of underdetermined linear systems.
It corresponds to the mathematical aspects of what is know as ``Compressed sensing'' in Signal Processing.
Here we consider a naive ``sparse recovery" LP-approach, which consists of solving
$\min\{\Vert\x\Vert_1:\A\x=b\}$ where $\x$ is the vector of
coefficients of the unknown polynomial and $\A\x=b$ are linear constraints obtained from 
evaluations at some given points. By minimizing the $\ell_1$ norm one
expects to obtain a 
``sparse" solution to the undetermined system $\A\x=b$. However 
since the matrix $\A$ does not satisfy the sufficient {\it Restricted Isometry Property} (RIP), exact  recovery 
is not guaranteed (at least by invoking results from compressed sensing).
\BM{Only probabilistic results may be expected in the univariate case
  if enough sampling points (of the order $\mu\,r\,log(d)$ where $r$
  is number of non-zero terms, $d$ is the maximal degree of the terms and $\mu>1$ is a
constant measuring the coherence of $\A$) are
  chosen at random, as in \cite{CandesProbabilisticRIPlessTheory2011}.}

\subsection*{Super-resolution}

\BM{{\it Super-resolution} refers to techniques to enhancing the
  resolution of sensing systems. In \cite{candes_towards_2014}, it
  refers to the process or retrieving fine scale
structures from coarse scale information, such as Fourier coefficients.
In more mathematical terms, it} consists in recovering the
  support of a sparse atomic (signed) measure on a compact set $K$,
  from known moments. Hereafter we will consider the particular case 
  where $K$ is the multi-dimensional torus $\T^n\subset\C^n$.  In the work of
  Cand\`es and Fernandez-Granda \cite{candes_towards_2014} it is
  shown that if the support atoms of the measure are well-separated then the
  measure is the unique solution of an infinite-dimensional convex optimization problem on a space of measures
  with the  total variation as minimization criterion. In the univariate case its (truncated) moment matrix can be recovered
  by solving a single {\it Semidefinite Program}   (SDP).
  The number of evaluations needed for exact recovery is then at most $4s$ if $s$ is the number of atoms\footnote{As noted in Cand\`es and Fernandez-Granda \cite{candes_towards_2014}, with the proviso that 
      the number of evaluations is larger than 128} (and in fact significantly less in all numerical examples provided).
  Interestingly, the total-variation minimization technique adapts nicely to noisy model and yields stable approximations of the weighted sum of Dirac measures, provided that the support atoms are well separated, see e.g. \cite{candes_super-resolution_2013}, \cite{azais_spike_2015}, \cite{duval_exact_2015}.

An extension to the multivariate case has been proposed in \cite{de_castro_exact_2017} to recover weight sums of Dirac measures in $\R^{n}$, where now one needs to solve a {\it hierarchy} of semidefinite programs.

The existence and unicity of the solution of the total variation minimization problem relies on the
  existence of a dual certificate, that is, a polynomial with $\sup$-norm reached at the points of the support of the measure. 
  The relaxation into a hierarchy of semidefinite programs \cite{de_castro_exact_2017} yields a decomposition into a finite weighted  sum of Dirac measures, provided that at some order of the hierarchy, a flat extension condition is satisfied at an optimal solution. Then the decomposition can  be recovered from the moment matrix at this optimal solution by applying a Prony's like technique.

\subsection*{Contribution}\ \vspace{-0.5cm}

$\bullet$ We propose a new multivariate variant of Prony's method for
sparse polynomial interpolation\BM{, which avoids projections in one
 variables and requires a small number of evaluations. In particular,
it differs from approaches such as
\cite{giesbrecht_symbolicnumeric_2009}, which uses special ``aligned'' moments to
apply Prony univariate method.}
In the univariate case, the new method only requires $r+1$
evaluations (instead of $2r$) where $r$ is the number of monomials of 
the blackbox polynomial. \BM{Similarly in the multivariate case, we
  show that the number of needed evaluations is significantly reduced.}
It involves a Toeplitz matrix rather than a 
Hankel matrix. Numerical experiments confirm the theoretical result 
regarding the number of evaluations \BM{and robustness against
  perturbations. This new multivariate Toeplitz-Prony can be seen as
  an extension of ESPRIT methods in several variables. As stated in
\cite{StoicaSpectralanalysissignals2005}[p.167], ESPRIT should be
preferred to MUSIC for frequency estimation in signal processing.
The numerical experiments corroborate this claim by showing the
good numerical behavior of the new multivariate Toeplitz-Prony method.  
}

$\bullet$ We consider the naive  sparse recovery approach 
to sparse interpolation via $\ell_1$-norm minimization $\min\{\Vert\x\Vert_1:\A\x=b\}$ and we characterize optimal solutions
via standard arguments of linear programming (LP). Interestingly, 
this characterization is a ``formal analogue" in appropriate spaces of  that in super-resolution (\ref{super-resolution}) 
in some measure spaces.
However as the matrix $\A$ does not satisfy the RIP
there is no guarantee (at least by invoking results from compressed sensing) that an optimal solution is
unique and corresponds to the unique sparse black-box polynomial $g$.

$\bullet$ 
We then propose another approach which uses the fact that one has the choice of points $(\zeta_k)\subset\C^n$ at which evaluations
of $g$ can be done through the black-box and yields the following simple but crucial observation:
By choosing $\zeta_k$ as some power $\varphi^\beta$ with $\beta\in\N^n$
(and where  $\varphi\in\T^n$ is fixed, arbitrary) the sparse polynomial $p$ can be viewed as a signed atomic measure $\mu$ on $\T^n$
with finitely many atoms $(\varphi^\alpha)\subset\T^n$ associated with the nonzero coefficients of $p$ (the signed weights of $\mu$
associated with each atom).
In doing so the sparse interpolation problem is completely equivalent
to a super-resolution problem on the multi-dimensional torus $\T^n$.
We prove a new unicity theorem (Theorem \ref{thm:unicity}) for the optimal
solution of the super-resolution problem on $\T^{n}$, provided enough moments
of the measure are known. Namely, if $\fc\ge 4 \pi
r (r-1) \Ec(\Xi)$ where  $\Ec(\Xi)$ is the interpolation entropy of the
support $\Xi$ of a measure $\mu$ on $\T^{n}$ and $r= \vert \Xi \vert$, we show that the
super-resolution optimisation problem with all the moments of $L_{1}$-degree
$\ge \fc$ of the measure $\mu$ has a unique solution (i.e. $\mu$). 
This result is  non trivial extension to any dimension of Theorems 1.2,
1.3 in Candès and Fernandez-Granda  \cite{candes_towards_2014} proved for one and two variables.
\BMe
Consequently, the sparse polynomial is the unique optimal solution of a certain infinite-dimensional linear program on a space of measures, provided that a geometric condition of minimum spacing (between atoms of the support) is satisfied
and sufficiently many evaluations are available. Notice that previous works on Prony's method (e.g. \cite{comer2012}) have also exploited 
(but in a different manner) evaluations at consecutive powers of a fixed element. In fact our view of a polynomial as a signed atomic measure 
on the torus is probably the shortest way to explain why Prony's method can be used for polynomial interpolation 
(as the original Prony's method can be interpreted directly as reconstructing an atomic measure on the complex plane from 
some of its moments \cite{kunis_multivariate_2016}).

We then relax this problem to a \BM{new} hierarchy of semidefinite
programs. \BM{This hierarchy requires less moments or evaluations than
  a degree-based moment relaxation.}
In principle, the convergence is only asymptotic (and guaranteed to
be finite only in the cases $n=1, 2$). However generic finite
convergence results of polynomial optimization of 
  \cite{Nie} seem to be also valid in our context as evidenced from our numerical
  experiments (and in those in De Castro et al. \cite{de_castro_exact_2017} in signal processing).
The flat extension rank condition on moment matrices of Curto and Fialkow 
\cite[Theorem 1.1]{curto}
or its generalization in \cite{laurent_generalized_2009}
can be extended to Toeplitz-like moment matrices \cite{cedric},
to test whether finite convergence takes place.
  In all our numerical experiments, finite
  convergence takes place and the coefficients and exponents of the
  unknown polynomial could be extracted. To give an idea, a univariate
  polynomial of degree $100$ with 3 atoms can be recovered by solving
  a single SDP with associated $4\times 4$ Toeplitz matrices and
  which only involves $4$ evaluations. On the other hand if some atoms are close
  to each other then more information (i.e. evaluations) is needed as
  predicted by the spacing condition (and confirmed in some numerical
  experiments).

$\bullet$ In practice we reduce the number of measurements (i.e., evaluations) needed to retrieve a sparse polynomial when using super resolution. To do this we invoke a result (Lemma \ref{lemma:atom} of this paper)
related to the full complex moment problem. It states that atomic measures on $\C^n$ with finitely many atoms are completely characterized by their moments $(\int\z^\alpha\,d\mu)_{\alpha\in\N^n}$,
with no need of all moments $(\int \bar{\z}^\beta\z^\alpha\,d\mu)_{\alpha,\beta}$ involving conjugates.
This result, which holds true in full generality, yields a simplified hierarchy with significant computational savings. It is the subject of future work to determine whether this preserves the guarantee of ``asymptotic" recovery of the original complete hierarchy; in our numerical experiments, finite convergence is always observed, and with fewer measurements than in the original method.

$\bullet$ A rigorous LP-approach. In fact the interpolation problem is even a {\it discrete} super-resolution problem
(i.e. recovery of a discrete signal) where the atomic measure consists of finitely many atoms on a fixed grid $\{t/N\}_{t=0,\ldots,N-1}$ as described in Cand\`es and Fernandez-Granda 
\cite[\S 1.4]{candes_towards_2014}. Therefore, in view of our new
uniqueness result for $n>2$, this fact also validates exact recovery 
via  a (sparse recovery) LP-formulation of $\ell_1$-minimization 
$\min_\x\{\Vert\x\Vert_1: \A\x=b\}$
{\it provided} that the spacing condition is satisfied and evaluations  (modeled by the constraints $\A\x=b$ are made in a certain manner on the torus $\T^n$, and not
on a random sample of points in $\R^n$). Interestingly, this provides us with an important  case of sparse recovery
where exact recovery is guaranteed even though the RIP property is not satisfied. However from a practical side the SDP formulation is
more efficient and elegant. Indeed for instance in the univariate case 
the size of the Toeplitz matrix involved is directly related to the number of 
atoms to recover whereas in the sparse recovery LP-approach, one has to
fix \textit{a priori} the length $N$ of the vector $\x$ (which depends on the degree of the unknown polynomial, possibly very large) even if 
ultimately one is interested only in its few non zero entries (usually a very small number).

$\bullet$ Finally we provide a numerical comparison of the three approaches (LP, SDP and Prony) 
on a  sample of problems and comment on their respective advantages and drawbacks. We clarify the relationship between Prony's method and super-resolution. In \cite{candes_towards_2014} Prony's method was briefly mentioned and neglected as sensitive to noise in the data (in contrast to super-resolution). We try to clarify this statement: actually, super-resolution \textit{requires} Prony's method (or some variant of it) to extract relevant information from the output (the optimal solution) of the semidefinite program. In other words, super-resolution preprocesses the input data to Prony's method via a convex optimization procedure. We find that this sometimes helps to deal with noise in the context of polynomial interpolation, confirming the elegant theory of \cite{candes_towards_2014}. In some instances, super resolution does not perform well because of numerical issues present in current semidefinite programming solvers. To the best of our knowledge this drawback has not been discussed in 
the literature.

\section{Notation, definitions and Preliminary results}
\subsection{Notation and definitions}
Let $\R[\x]$ (resp. $\R[\x]_d$) denote the ring of real polynomials in the variables
$\x=(x_1,\ldots,x_n)$ (resp. polynomials of degree at most $d$), whereas $\Sigma[\x]$ (resp. $\Sigma[\x]_d$) denotes 
its subset of sums of squares (SOS) polynomials (resp. of SOS of degree at most $2d$).
For every
$\alpha=(\alpha_{1},\ldots, \alpha_{n})\in\N^n$ the notation $\x^\alpha$ stands for the monomial $x_1^{\alpha_1}\cdots x_n^{\alpha_n}$ and for every $i\in\N$, let $\N^{p}_d:=\{\beta\in\N^n:\sum_j\beta_j\leq d\}$ whose cardinal is $s(d)={n+d\choose n}$.
A polynomial $f\in\R[\x]$ is written $f=\,\sum_{\alpha\in\N^n}\,f_\alpha\,\x^\alpha$ with $f_{\alpha}$ almost all equal to zero,
and $f$ can be identified with its vector of coefficients $\f=(f_\alpha)$ in the canonical basis $(\x^\alpha)$, $\alpha\in\N^n$.

Denote by $\R[\x]_d^*$ the space of linear functionals on $\R[\x]_d$, identified with $\R^{s(d)}$.
For a closed set $\K\subset\R^n$ denote by $C_d(\K)\subset\R[\x]_d$ the convex cone of polynomials of degree at most $d$ that are nonnegative on $\K$, and for $f\in\R[\x]_d$, let
\[\Vert f\Vert_1\,:=\,\Vert \f\Vert_1\,=\,\displaystyle\sum_{\alpha\in\N^n_{d}}\vert f_\alpha\vert.\]

Denote by $\mathcal{S}^t\subset\R^{t\times t}$ the space of real symmetric matrices,
and for any $\A\in\mathcal{S}^t$ the notation $\A\succeq0$ stands for $\A$ is positive semidefinite.

A real sequence $\mathbf{\sigma}=(\sigma_\alpha)$, $\alpha\in\N^n$, has a {\it representing measure} supported on a set $S\subset\R^n$ if
there exists some finite Borel measure $\mu$ on $S$ such that 
\[\sigma_\alpha\,=\,\int_{S}\x^\alpha\,d\mu(\x),\qquad\forall\,\alpha\in\N^n.\]

The space of finite Borel (signed) measures (resp. continuous functions) on
 $S\subset\R^n$ is denoted by $\M(S)$ (resp. $\mathscr{C}(S)$).

\subsection{Super-resolution}

Let $S\subset\R^n$ and suppose that $\mu$ is a signed atomic measure
supported on a  few atoms $(\z_i)\subset S$, $i=1,\ldots,s$,
i.e., $\mu=\sum_{k=1}^s w_i\,\delta_{\xi_i}$. Super-resolution is concerned with retrieving the 
supports $(\xi_i)\subset S$ as well as the weights $(w_i)\subset\R$, from the sole knowledge of a few (and as few as possible) 
``moments" $(\sigma_k=\int_S g_k\,d\mu)$, $k=1,\ldots,m$, for some functions $(g_k)$. One possible approach is to solve the convex optimization
problem:
\begin{equation}
\label{super-resolution}
\rho\,=\,\displaystyle\inf_{\mu\in \M(S)}\,\{\,\Vert \mu\Vert_{TV}:\quad  \int_S g_k\,d\mu \,=\,\sigma_k,\quad k=1,\ldots,m\,\}\end{equation}
where $\M(S)$ is the space of finite signed Borel measures on $S$ equipped with the total-variation nom 
$\Vert\cdot\Vert_{TV}$. The dual of (\ref{super-resolution}) reads:
\begin{equation}
\label{super-resolution-dual}
\rho^*\,=\,\displaystyle\sup_{\lambda\in \R^m}\,\{\,\sigma^T\lambda: \Vert \sum_{k=1}^m \lambda_k\,g_k\Vert_\infty\,\leq\,1\,\},\end{equation}
where $\Vert f\Vert_\infty=\sup_{\x\in S}\vert f(\x)\vert$. 
(In fact and interestingly, both programs (\ref{super-resolution}) and its dual (\ref{super-resolution-dual}) have already appeared in the sixties
in a convex and elegant formulation of some bang-bang type optimal control problems; see Neustadt \cite{neustadt_optimization_1964} and Krasovskii \cite{krasovskii_theory_1968}.)
The rationale behind this approach is the analogy with {\it sparse recovery}. 
Indeed, the total variation norm $\Vert\mu\Vert_{TV}$ is the analogue for measures
of the $\ell_1$-norm for vectors\footnote{To see this suppose that $\mu$ is the signed atomic measure
$\sum_{i=1}^s \omega_i\delta_{\xi_i}$. Then $\Vert\mu\Vert_{TV}=\Vert \omega\Vert_1$.}. 

In the univariate case when $S$ is an interval (one may also consider the torus $\T\subset\C$)
and the $g_k$'s are the usual algebraic monomials $(\x^k)$, solving (\ref{super-resolution}) then reduces to solving a single semidefinite program (SDP)
and Cand\`es and Fernandez-Granda \cite{candes_towards_2014} have shown that
exact reconstruction is guaranteed provided that
the (unknown) $s$ supports $(z_i)$ are sufficiently spaced and $m\geq \max [4s , 128]$. 

This approach was later generalized to
arbitrary dimension and semi-algebraic sets in De Castro et al \cite{de_castro_exact_2017};  in contrast to
the univariate case, one has to solve a hierarchy of semidefinite programs (instead of a single one).
In the 2-dimensional and 3-dimensional examples treated in \cite{de_castro_exact_2017}, exact recovery is obtained rapidly.

Alternatively one may also recover $\mu$ via the algebraic multivariate Prony method described in \cite{mourrain_polynomial-exponential_2016} and the references therein,
and for which {\it no} minimum geometric separation of the supports is required. In addition, in the univariate case only $m=2 r$ moments are needed for exact recovery.

\subsection{The multivariate Prony method} \label{Prony}
\subsubsection{Hankel Prony}
\label{subsub:Hankel Prony}
A multivariate Prony method has been proposed in
\cite{mourrain_polynomial-exponential_2016,harmouch_structured_2017}\footnote{An
implementation is available at \url{https://gitlab.inria.fr/AlgebraicGeometricModeling/TensorDec.jl}}. We refer to it in this paper as ``Hankel Prony''. It consists in two successive linear algebra operations.\\\\
\textbf{Input} 
\begin{itemize}
\item Measurements $y_{\alpha}\in \C$ for $\alpha=(\alpha_1,\hdots,\alpha_n)\in \mathbb{N}^n$ up to a degree $\|\alpha \|_1 = \sum\limits_{i=1}^n \alpha_k \leqslant d$
\item A threshold $\epsilon > 0$ to determine the numerical rank
\end{itemize}
\textbf{Output} Atomic measure $\mu$
\begin{enumerate}
\item For $d_1 := \lfloor \frac{d}{2} \rfloor $ and $d_2 := \lceil \frac{d}{2} \rceil$ (where $\lfloor \cdot \rfloor$ and $\lceil \cdot \rceil$ denote the ceiling and floor of an integer), a singular value decomposition of a submatrix containing the measurements (i.e. $H_{0}=(y_{\alpha+\beta})_{|\alpha| \leqslant d_1,|\beta|\leqslant d_2-1} = U\Sigma V^*$ where $(\cdot)^*$ stands for adjoint); the threshold $\epsilon > 0$ is used to determine the numerical rank $r$ using the ratio of successive singular values. Precisely, the singular values in the diagonal matrix $\Sigma$ are sorted in decreasing order and the rank is taken to be equal to the first instance when the ratio drops below the threshold. Multiplication matrices of size $r \times r$ can then be formed for each variable, i.e. $M_{k} = \Sigma_r^{-1} U_r H_{k} V_r$ where $H_{k}= (y_{\alpha+\beta+e_k})_{|\alpha| \leqslant d_1,|\beta|\leqslant d_2-1}$, $\Sigma_r$ contains the $r$ greatest singular values in its diagonal, $U_r$ is composed of the first $r$ rows of the conjugate transpose of $U$, $V_r$ is composed of the first $r$ columns of the conjugate of $V$, and $e_k$ denotes the row vector of size $n$ of all zeros apart from $1$ in position $k$.\\
\item An eigen-decompositon of a random linear combination of the multiplication matrices $\sum\limits_{k=1}^n \lambda_k M_{k} = PDP^{-1}$ (for generic $\lambda_1,\hdots,\lambda_n \in \mathbb{R}$) yields the atoms and the weights of the measure $\mu := \sum_{i=1}^r \omega_i \delta_{\xi_i}$. Precisely, the atoms are $\xi_i := ||P_{i}||_2^{-2}(P_i^* M_{k} P_i)_{1\leqslant k \leqslant n}$ where $P_i$ denotes the $i\textsuperscript{th}$ column of $P$ and the weights are \begin{equation} w_i := \frac{e_1 H_{0} V_r P_i}{(\xi_i^\alpha)_{\|\alpha\|_1\leqslant d_2-1} V_r P_i}. \end{equation}
\end{enumerate}
We apply the above procedure to retrieve a measure from the output of the semidefinite optimization in super-resolution.

\subsubsection{Toeplitz Prony}
\label{subsub:Toeplitz Prony}
We now describe a new version of Prony's method, 
which we refer to as ``Toeplitz Prony''. In the setting of polynomial
interpolation, Prony's method can be adapted to exploit the fact that
we are interested in finding an atomic measure supported on the torus
with real weights. As a result, fewer evaluations are necessary.
For simplicity, we described this idea in the univariate setting,
\BM{which is well-known in signal processing. We will describe and exploit
a multivariate extension, which also requires fewer evaluations}.

{Following \cite{kunis_multivariate_2016} we are searching for a measure of finite support of the form $\mu = \sum_{k=1}^r \omega_k \delta_{\xi_{k}}$ where the weights $\omega_k$ are real and the support points $\xi_k$ with coordinates of norm $1$.
  Prony's method is based on the fact that the polynomial $p(x) = x^r - \sum_{k=1}^{r-1} p_k x^k := (x-\xi_{1})\hdots (x-\xi_{r})$ satisfies
  $\int_{\mathbb{C}} q(z) p(z) d\mu(z) = 0$ for any $q\in \C[x]$. We consider instead the following relations
\begin{equation}
\begin{array}{rcl}
\int_{\mathbb{C}} \bar{z}^0 p(z) d\mu(z) &=& 0, \\
& \vdots & \\
\int_{\mathbb{C}} \bar{z}^{r-1} p(z) d\mu(z) &=& 0, \\
\end{array}
\end{equation}
yielding
\begin{equation}
\begin{pmatrix}
\sigma_0 & \hdots & \sigma_{r-1} \\
\vdots & & \vdots \\
\overline{\sigma_{r-1}} & \hdots & \sigma_{0}
\end{pmatrix}
\begin{pmatrix}
p_0 \\
\vdots \\
p_{r-1}
\end{pmatrix}
=
\begin{pmatrix}
\sigma_r \\
\vdots \\
\sigma_{1}
\end{pmatrix}
\end{equation}
where $\sigma_{k}= \int z^{k} d\mu$ and  $\overline{\sigma_{k}}= \int \overline{z}^{k} d\mu=  \int {z}^{-k} d\mu$ since $\mu$ has real weights and the coordinates of its support points are of norm $1$.
Note that only $r+1$ evaluations are needed and that the above matrix is a Toeplitz matrix, as opposed to the Hankel matrix of the Prony method. Both matrices have the same size, but to construct the Hankel matrix, $2r$ moments $\sigma_{k}$ are needed.
}

The approach can be extended to the multivariate case, with Toeplitz like moment matrices. The rows are indexed by monomials and columns indexed by anti-monomials, that is, monomials with negative exponents. The entries of the matrix  indexed by $(\alpha,-\beta)$ with $\alpha, \beta\in \N^{n}$ is $\sigma_{\alpha-\beta}=\int z^{\alpha-\beta}d\mu$.  The same algorithm as in the Hankel Prony approach can then be used to obtain the decomposition of the measure from its moments. Note that the variant of Prony's method  \cite{sauer_pronys_2016-1} 
(which also uses Toeplitz matrices) is computationally more demanding and thus not relevant here.

\subsubsection{Advanced Prony}
We now describe a more elaborate form of Prony's method, which we will refer to as ``Advanced Prony''.
The multivariate Prony method decomposes a multi-index sequence $\sigma=(\sigma_{\alpha})_{\alpha\in \N^{n}}\in \C^{\N^{n}}$, or equivalently a multivariate series, into a sum of polynomial-exponential sequences or series, from a finite set $\{\sigma_{\alpha}, \alpha\in A\subset \N^{n}\}$ of coefficients.

In the case of sparse interpolation, the coefficients
$\sigma_{\alpha}$ of the series are the values $g(\varphi^{\alpha})$ for
$\alpha\in A \subset \N^{n}$. If $g=\x^{\beta}$ $\beta\in \N^{n}$, the
corresponding series is the exponential series of $\xi$, where
$\xi = \varphi^{\beta}$.  Therefore if
$g= \sum_{i=1}^{r} \omega_{i} \x^{\beta_{i}}$ is a sparse polynomial,
the series $\sigma_{\alpha} = g(\varphi^{\alpha})$ decomposes into a sum
of $r$ exponential series with weights $\omega_{i}$ and frequencies
$\xi_{i}= \varphi^{\beta_{i}}$.  The weights $\omega_{i}$ are the
coefficients of the monomials of $g$ and the frequencies
$\varphi^{\beta_{i}}$ yield the exponents $\beta_{i}= \log_{\varphi}(\xi)$
of the monomials.

To compute this decomposition, we apply the following method.
Subsets of monomials $A_{0}, A_{1}\subset \x^{\N^{n}}$ are chosen adequately so that the rank of the Hankel matrix 
$$ 
H_{0}=(\sigma_{\alpha_{0}+\alpha_{1}})_{\alpha_{0}\in A_{0}, \alpha_{1}\in A_{1}}
$$
is the number of terms $r$. The Hankel matrices $H_{i}=(\sigma_{e_{i}+\alpha_{0}+\alpha_{1}})_{\alpha_{0}\in A_{0}, \alpha_{1}\in A_{1}}$ are also computed for $i=1,\ldots,n$ and $(e_{i})$ is canonical basis of $\N^{n}$.  The subsets $A_{0}, A_{1}$ are chosen so that the monomial sets $\x^{A_{0}}$ and $\x^{A_{1}}$ contain a basis of the quotient algebra of the polynomials modulo the vanishing ideal of the points $\xi_{1}, \ldots, \xi_{r}$.

Using Singular Value Decomposition \cite{harmouch_structured_2017} or a Gramm-Schmidt orthogonalization  process \cite{mourrain_polynomial-exponential_2016}, tables of multiplication by the variables in a basis of the associated Artinian Gorenstein algebra $\mathcal{A}_{\sigma}$ are deduced.  The frequencies $\xi_{i}\in \C^{n}$, which are the points of the algebraic variety associated to $\mathcal{A}_{\sigma}$, are obtained by solving techniques from multiplication tables, based on eigenvector computation. The weights $\omega_{i}$ can then be deduced from the eigenvectors of these multiplication operators.

To compute this decomposition, only the evaluations $g(\varphi^{\alpha})$ with $\alpha\in A=\cup_{i=1}^{n} e_{i}+A_{0}+A_{1}$ are required.

Naturally, the ``Advanced Prony'' can be adapted to the Hankel and Toeplitz cases described in the two previous sections, yielding approaches which we will refer to as ``Advanced H. Prony'' and ``Advanced T. Prony''.

\section{Sparse Interpolation}

In  \S \ref{Prony} we have seen how to solve the sparse interpolation problem via Prony's method. We now
consider two other approaches which both solve some convex optimization problem with
a sparsity-inducing criterion.

\subsection{A sparse recovery approach to interpolation}

Suppose that  $g^*\in\R[\x]_d$ is an unknown polynomial of degree $d$ and we can make a certain number of 
``black-box" evaluations  $g^*(\zeta_k)=\sigma_k$ at some points $(\zeta_k)\subset S$, $k=1,\ldots,s$, that we may choose to our convenience.
Consider the following optimization problem $\P$:
\begin{eqnarray}
\label{a1}
\P:& \rho=&\displaystyle\inf_{g\in\R[\x]_d}\,\{\,\Vert  g\Vert_1:\quad g(\zeta_k)\,=\,\sigma_k,\quad k=1,\ldots,s\,\}\\
\label{a2}
& =&\displaystyle\inf_{g\in\R[\x]_d}\,\{\,\Vert g\Vert_1:\quad \langle g,\delta_{\zeta_k}\rangle \,=\,\sigma_k,\quad k=1,\ldots,s\,\}
\end{eqnarray}
where $\delta_{\x_k}$ is the Dirac at the point $\zeta_k\in\R^n$, and $\langle\cdot,\cdot\rangle$
the duality bracket $\int_S fd\mu$ between $\mathscr{C}(S)$ and $\M(S)$. Equivalently $\P$ also reads:

\begin{equation}
\label{a10}
 \rho=\displaystyle\inf_{\,g}\,\{\displaystyle\sum_{\alpha\in\N^n_d}\vert g_\alpha\vert:\: \displaystyle
\sum_\alpha g_\alpha\,\zeta_k^\alpha\,=\,\sigma_k,\quad k=1,\ldots,s\,\},
\end{equation}
or in the form of an LP as:

\begin{equation}
\label{a11}
 \rho=\displaystyle\inf_{\,g^+,g^-\geq0}\,\{\displaystyle\sum_{\alpha\in\N^n_d}(g^+_\alpha+g^-_\alpha):\: \displaystyle
\sum_\alpha g^+_\alpha\,\zeta_k^\alpha-g^-_\alpha \zeta_k^\alpha\,=\,\sigma_k,\quad k=1,\ldots,s\,\}
\end{equation}
which is an LP. Let $\sigma=(\sigma_k)$, $k=1,\ldots,s$.
The dual of the LP (\ref{a11}) is the LP:

\begin{eqnarray}
\nonumber
\P^*:&\rho=\displaystyle\sup_{\lambda\in\R^s}&\{\,\sigma^T\lambda:\quad \vert\sum_{k=1}^s\,\lambda_k\,\zeta^\alpha_k\,\vert\leq\,1;\quad\alpha\in\N^n_d\,\}\\
\nonumber&=\displaystyle\sup_{\lambda\in\R^s}&\{\sigma^T\lambda:\quad 
\underbrace{\vert\langle\x^\alpha,\displaystyle\sum_{k=1}^s\lambda_k\,\delta_{\zeta_k}\rangle\vert}
_{=\vert m_{\lambda}(\alpha)\vert}\,\leq\,1;\quad\alpha\in\N^n_d\,\}\\
\label{a5}
&=\displaystyle\sup_{\lambda\in\R^s}&\{\sigma^T\lambda:\quad \Vert m_{\lambda}\Vert_\infty\,\leq\,1\,\},
\end{eqnarray}
where to every $\lambda\in\R^s$ is associated the vector $m_\lambda\in\R^{s(d)}$ defined by
\[m_\lambda(\alpha)\,:=\,\langle \x^\alpha, \sum_{k=1}^s \lambda_k\,\delta_{\zeta_k}\rangle\,=\,\sum_{k=1}^s\lambda_k\,\zeta_k^\alpha .\]
So in the dual $\P^*$ one searches for $\lambda^*$, equivalently the signed atomic measure $\mu^*:=
\sum_{k=1}^s\lambda^*_k\,\delta_{\zeta_k}$, as we also do in super-resolution (\ref{super-resolution}) (but in $\P^*$ the support is known).

\begin{lem}\label{lem:3.1}
Let $(\hat{g}^+,\hat{g}^-)$ be an optimal solution of the naive LP (\ref{a11}) with associated polynomial
$\x\mapsto \hat{g}(\x):=\hat{g}^+(\x)-\hat{g}^-(\x)$ \BM{and $s=r$
points of evaluation $\zeta_{1}, \ldots,\zeta_{r}$}. Let $\lambda^*\in\R^s$ be an optimal solution of its dual (\ref{a5}). Then:

(i) $\hat{g}$ has at most $r$ non-zero entries, out of potentially $s(d)={n+d\choose n}$. 

(ii) $-1\leq m_{\lambda^*}(\alpha)\leq 1$ for all $\alpha\in\N^n_d$, and 
\begin{equation}
\label{complementarity}
\hat{g}_\alpha\,>\,0\quad\Rightarrow\quad \underbrace{m_{\lambda}(\alpha)=1}_{\int\x^\alpha d\mu^*=1};\qquad
\hat{g}_\alpha\,<\,0\quad\Rightarrow\quad \underbrace{m_{\lambda}(\alpha)=-1}_{\int \x^\alpha d\mu^*=-1},\end{equation}
\end{lem}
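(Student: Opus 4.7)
The plan is to establish both parts by elementary linear programming arguments, exploiting that (\ref{a11}) is an LP with exactly $r$ equality constraints and $2 s(d)$ non-negativity constraints. Part (i) will follow from the existence of an optimal basic feasible solution (after removing componentwise cancellations between $\hat g^+$ and $\hat g^-$), and part (ii) will follow from strong LP duality together with complementary slackness.

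For part (i), I would first argue that at any optimum one may assume $\hat g^+_\alpha \hat g^-_\alpha = 0$ for every $\alpha$: otherwise, subtracting $\min(\hat g^+_\alpha,\hat g^-_\alpha)$ from both entries keeps the equality constraints $\sum_\alpha (g^+_\alpha - g^-_\alpha)\zeta_k^\alpha = \sigma_k$ satisfied while strictly decreasing the $\ell_1$-objective, contradicting optimality. Hence the supports of $\hat g^+$ and $\hat g^-$ are disjoint, and the support of $\hat g$ equals their union. Since (\ref{a11}) is feasible (any polynomial interpolating the data $(\zeta_k,\sigma_k)$ yields a feasible point) and the objective is bounded below by $0$, an optimal basic feasible solution exists, at which the total number of nonzero coordinates of $(\hat g^+,\hat g^-)$ is at most the number of equality constraints, namely $r$. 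Thus $\hat g$ has at most $r$ nonzero entries, which is (i).

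For part (ii), dual feasibility of $\lambda^*$ in (\ref{a5}) immediately gives $-1 \leq m_{\lambda^*}(\alpha) \leq 1$ for every $\alpha \in \N^n_d$. Strong duality ($\rho = \rho^*$) holds since both LPs are feasible and bounded, so complementary slackness applies. Reading off the Lagrangian of (\ref{a11}), the primal variable $g^+_\alpha$ is paired with the dual slack $1 - m_{\lambda^*}(\alpha)$, so $\hat g^+_\alpha > 0$ forces $m_{\lambda^*}(\alpha) = 1$; likewise $g^-_\alpha$ is paired with $1 + m_{\lambda^*}(\alpha)$, so $\hat g^-_\alpha > 0$ forces $m_{\lambda^*}(\alpha) = -1$. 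Combined with the disjointness of supports from part (i), these give exactly the implications (\ref{complementarity}).

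The only mildly delicate point is the implicit claim that $(\hat g^+,\hat g^-)$ may be taken to be an extreme optimum, so that the sparsity bound $r$ applies to the specific optimum under consideration; in case the feasible polytope has optimal faces of positive dimension, one simply selects a vertex of that face. Once this interpretation is fixed, the rest is textbook LP duality and the argument presents no genuine obstacle.
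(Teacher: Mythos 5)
Your proof is correct and follows essentially the same route as the paper: an optimal basic feasible solution of the LP gives the sparsity bound in (i), and complementary slackness gives the characterization in (ii). The one small difference is in how you establish $\hat g^+_\alpha \hat g^-_\alpha = 0$: you use an objective-decrement argument (subtracting $\min(\hat g^+_\alpha,\hat g^-_\alpha)$ from both strictly decreases the $\ell_1$-objective), whereas the paper observes that the columns of $\A$ paired with $g^+_\alpha$ and $g^-_\alpha$ are negatives of each other and so cannot both belong to an independent basis; both are standard, and your version has the minor advantage of applying to every optimal solution rather than only to a vertex.
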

\BM{\begin{proof}
By standard arguments in Linear Programming, an optimal solution of
(\ref{a11}) is a vertex of the associated polytope, with at most $r$
non-zero entries. This proves ($i$).

To prove ($ii$), we check that
$\hat{g}_\alpha^+ \times \hat{g}_\alpha^-=0$ at the optimal solution
$\hat{g}$, since the columns of $A$ associated to $g^{+}_{\alpha}$ and
$g^{-}_{\alpha}$ are opposite and the basis columns defining the
vertex $\hat{g}$ are independent.  Let $\hat{g}_\alpha=
\hat{g}_\alpha^+ - \hat{g}_\alpha^-$. Then, the
complementary slackness condition (\cite{DantzigLinearprogrammingIntroduction1997}[Theorem 5.4]) implies that if
$\hat{g}_\alpha = \hat{g}_\alpha^+>0$ then $m_{\lambda}(\alpha)=1$,
and if $\hat{g}_\alpha = - \hat{g}_\alpha^- <0$ then
$m_{\lambda}(\alpha)=-1$.
\end{proof}}
 
\noindent
{\bf Exact recovery.}
\BM{Lemma \ref{lem:3.1} shows that an optimal solution $\hat{g}$ of
  (\ref{a11}) corresponds to a sparse polynomial with at most $r$ non-zero terms. But it may not
  coincide with the sparse polynomial $g$.}
A natural issue is {\it exact recovery} \BM{by increasing the number $s$ of sampling}, i.e., is there a value of $s$ (with possibly $s\ll O(n^d)$) for which $\hat{g}=g^*$? And if yes, how small $s$ must be?

A well-known and famous condition for exact recovery of sparse solution $\x^*$ to 
\begin{equation}
\label{sparse}
\min_\x \{\,\Vert \x\Vert_1:\quad \A\,\x\,=\,b\,\}
\end{equation}
is the so-called {\it Restricted Isometry Property} (RIP) of the matrix $\A$ introduced in Cand\`es and Tao
\cite{candes-tao} (see also Cand\`es \cite[Definition 1.1]{candes})
from which celebrated results of Cand\`es et al. \cite{candes-romberg} in compressed sensing could be obtained.

The interested reader is also referred to Fan and Kamath \cite{fan-kamath} for an interesting recent comparison of 
various algorithms to solve (\ref{sparse}) (even in the case where the RIP does not hold). 

It turns out
that for Problem (\ref{a10}) the resulting (Vandermonde-like) matrix $\A$ does {\it not}
satisfy the RIP property, and so exact recovery of the sparse polynomial as solution of (\ref{a10})
(and equivalently of the LP (\ref{a11})) \BM{cannot be guaranteed by
these techniques.}

\subsection{A formal analogy with super-resolution}
\label{analogy}
Observe that (\ref{a2}) is the analogue in function spaces of the {\it super-resolution} problem in measure spaces.
Indeed in both dual problems (\ref{super-resolution-dual}) and (\ref{a5}) one searches for a real vector $\lambda\in\R^m$.
In the former it is used to build up a {\it polynomial} $h:=\sum_{k=1}^m\lambda^*_kg_k$ uniformly bounded by $1$ 
on $S$ ($\Vert h\Vert_\infty\leq 1$) 
 while in the latter it is used to form an {\it atomic measure} $\sum_{k=1}^m\lambda^*_k\delta_{\zeta_k}$ 
whose moments (up to some order $d$) are uniformly bounded by $1$ ($\Vert m_\lambda\Vert_\infty\leq 1$).

Moreover, let $(\mu^*,\lambda^*)$ be a pair of optimal solutions to
(\ref{super-resolution})-(\ref{super-resolution-dual}). Then $\mu^*=\mu^+-\mu^-$
where $\mu^+$ and $\mu^-$ are positive atomic measures respectively supported on disjoint sets
$X_1,X_2\subset S$, and each point
of $X_1$ (resp. $X_2$) is a zero of the polynomial $\x\mapsto 1-\sum_{k=1}^m \lambda^*_k\,g_k(\x)$
(resp. $\x\mapsto 1+\sum_{k=1}^m \lambda^*_k\,g_k(\x)$). That is:

\begin{equation}
\label{b1}
\sup_{\zeta\in S}\,\vert  \displaystyle\sum_{k=1}^m\lambda^*_k\,g_k(\zeta)\vert\,\leq\,1,\quad\mbox{ and }
\end{equation}
\begin{equation}
\label{b2}
\zeta\in\mathrm{supp}(\mu^+)\:\Rightarrow\:\sum_{k=1}^m  \lambda^*_k\,g_k(\zeta)\,=\,1;\quad
\zeta\in\mathrm{supp}(\mu^-)\:\Rightarrow\:\sum_{k=1}^m \lambda^*_k\,g_k(\zeta)\,=\,-1,
\end{equation}
(compare with (\ref{complementarity})).

\subsection{Sparse interpolation as super-resolution}

In \S \ref{analogy} we have shown that the ``sparse recovery" 
formulation (\ref{a1}) of the sparse interpolation problem
could be viewed as a ``formal analogue" in function spaces of the super-resolution problem in measure spaces.

In this section we show that sparse interpolation is in fact a true (as opposed to formal) super-resolution 
problem on the torus $\T^n\subset\C^n$, provided that evaluations are made at points chosen in a certain adequate manner. So let 
\[\x\mapsto g(\x)\,=\,\sum_{\beta\in\N^n_d}g_\beta\,\x^\beta,\]
be the black-box polynomial with unknown real coefficients $(g_\beta)\subset\R$. \\

\noindent
{\bf A crucial observation.} \BM{This simple observation, which is the key
  point in most of the sparse interpolation methods, consists to see
  evaluations of the black-box polynomial at well-chosen points as moments of an atomic-measure.}
Let $\varphi\in\T^n$ (with $\varphi\neq(1,\ldots,1)$) be fixed, e.g., of the form:
\begin{equation}
\label{zedezero}
\varphi\,:=\,(\exp{(2i\pi/N_{1})},\ldots,\exp{(2i\pi/N_{n})}),\end{equation}
for some arbitrary (fixed) \BM{non-zero integers $N_{i}$}, or
\begin{equation}
\label{zedezero-1}
\varphi\,:=\,(\exp{(2i\pi\theta_1)},\ldots,\exp{(2i\pi\theta_1)}),\end{equation}
for some arbitrary (fixed) \BM{$\theta_i\in\R\setminus \N$}. 
With the choice (\ref{zedezero-1}) 
\[\mbox{[ $\alpha,\beta\in\mathbb{Z}^n$  and $\alpha\neq\beta$ ]}\quad\Rightarrow\quad
\varphi^\alpha\,\neq\,\varphi^\beta.\]
whereas with the choice (\ref{zedezero}) 
\[\mbox{[ $\alpha,\beta\in\mathbb{Z}^n$, $\max_i\max[\vert\alpha_i\vert,\vert\beta_i\vert]<N$,  and $\alpha\neq\beta$ ]}\quad\Rightarrow\quad
\varphi^\alpha\,\neq\,\varphi^\beta.\]
Next for every $\alpha\in\N^n$ :
\begin{eqnarray}
\label{values}
\sigma_\alpha\,:=\,g(\varphi^\alpha)\,=\,\sum_{\beta\in\N^n}g_\beta\,(\varphi^\alpha)^\beta&=&
\sum_{\beta\in\N^n}g_\beta\,(\varphi_{1}^{\alpha_1})^{\beta_1}\cdots (\varphi_{n}^{\alpha_n})^{\beta_n}\\
\nonumber
&=&\sum_{\beta\in\N^n}g_\beta\,(\varphi_{1}^{\beta_1})^{\alpha_1}\cdots (\varphi_{n}^{\beta_n})^{\alpha_n}\\
\nonumber
\label{equiv}
&=&\sum_{\beta\in\N^n}g_\beta\,(\varphi^\beta)^\alpha\,=\,\int_{\T^n}\z^\alpha\,d\mu_{g,\varphi}(\z),
\end{eqnarray}
where $\mu_{g,\varphi}$ is the signed atomic-measure on $\T^n$ defined by:
\begin{equation}
\label{equiv-measure}
\mu_{g,\varphi}\,:=\,\sum_{\beta\in\N^n}g_\beta\,\delta_{\xi_\beta}\quad\mbox{(and $\Vert\mu_{g,\varphi}\Vert_{TV}=\sum_\beta \vert g_\beta\vert=\Vert g\Vert_1$)},
\end{equation}
where $\xi_\beta:=\varphi^\beta\in\T^n$, for all $\beta\in\N^n$ such that $g_\beta\neq0$, and $\delta_\xi$
is the Dirac probability measure at the point $\xi\in\C^n$.\\

\begin{center}
In other words: {\it Evaluating $g$ at the point $\varphi^\alpha\in\T^n$ is the same as evaluating the moment $\int_{\T^n}\z^\alpha\,d\mu_{g,\varphi}$
of the signed atomic-measure $\mu_{g,\varphi}$. Therefore, the sparse interpolation problem is the same as recovering the finitely many unknown weights $(g_\beta)\subset\R$ and supports $(\varphi^\beta)\subset\T^n$ of the signed measure $\mu_{g,\varphi}$ on $\T^n$, from 
finitely many $s$ moments of $\mu_{g,\varphi}$, that is, a super-resolution problem.}
\end{center}
\begin{rem}
\label{remark-1}
The $n$-dimensional torus $\T^n$ is one among possible choices but any other choice of a set 
$S\subset\C^n$ and $\varphi\in\C^n$  (or $S\subset\R^n$ and $\varphi\in\R^n$) 
is valid provided that $(\varphi^\alpha)_{\alpha\in\N^n}\subset S$.
For instance $\varphi\in (-1,1)^{n}$ and $S:=[-1,1]^n$ is another possible choice. 
\BM{As the maximal degree of the powers $\varphi^{\alpha}$ required to reconstruct a sparse
  polynomial $g$ with $r$ non-zero terms is rapidly decreasing with
  the dimension $n$, choosing $S:=[-1,1]^n$ is also reasonable from a
  numerical point of view when $n>1$. This claim is corroborated by the
  numerical experiments in Section \ref{sec:experiments}. 
}
\end{rem}

Let $\Ascr^{1}_{\fc} = \{\alpha \in \Z^{n}\mid \Vert \alpha \Vert_{1}=
\sum_{i} \vert \alpha_{i}\vert \le \fc\}$.
With the choice $S:=\T^n$, $\varphi\in\T^n$ as  in (\ref{zedezero}) or in (\ref{zedezero-1}), and $\fc\in\N$, consider  the optimization problems:

\begin{equation}
\label{a3}
\rho_{\fc}\,=\,\displaystyle\inf_{ \mu\in\M(\T^n)}\,\{\,\Vert \mu\Vert_{TV}:\quad \int_{\T^n}\z^\alpha\,d\mu(\z) \,=\,\sigma_\alpha,\quad \alpha\in\Ascr^{1}_{\fc}\},
\end{equation}
where $\sigma_\alpha=g(\varphi^\alpha)$ is obtained from the black-box polynomial $g\in\R[\z]$, and
\begin{equation}
\label{a3-dual}
    \rho^*_{\fc}\,=\,\displaystyle\sup_{g\in\C[\z;\mathscr{A}_{\fc}]}\,\{\,\Re(\sigma^Tg) :\Vert \Re(g(\z))\Vert_\infty \leq 1\}
\end{equation}
(where $\a=(\sigma_\alpha)$).
Notice that the super-resolution problem (\ref{a3}) has the following 
equivalent formulation in terms of an infinite dimensional LP
\begin{equation}
\label{a33}
\rho_{\fc}\,=\,\displaystyle\inf_{\mu^+,\mu^-\in\M(\T^n)}\,\{\,\int_{\T^n}1\,d(\mu^++\mu^-):
\int_{\T^n}\z^\alpha\,d(\mu^+-\mu^-) 
\,=\,\sigma_\alpha,\quad \alpha\in\mathscr{A}_{\fc}\},
\end{equation}
with same dual (\ref{a3-dual}) as (\ref{a3}). Moreover $\rho_{\fc}=\rho^*_{\fc}$; the proof 
for $S=\T^n$ is very similar to the proof in De Castro et al. \cite{de_castro_exact_2017} for the case where $S\subset\R^n$ is a compact semi-algebraic set.

We next prove that the minimization problem \eqref{a3} has a
unique solution, provided that $\fc$ is sufficiently large.

For $d\in \NN$, let $\AA_{d}$ be the vector space spanned
by the monomials $\zb^{\alpha}$ with $\alpha\in \Ascr_{d}^{1}$.
For $\Xi=\{\xi_{1},\ldots,\xi_{r}\} \subset \T^{n}$, we denote 
by $\Ec(\Xi)$, the lowest $M=\max \{ |u_{i}(\xi_{r})|^{2} \}$ for all the families of interpolation polynomials $u_{1},\ldots,u_{r}\in \C[\zb^{\pm 1}]$ of
total degree $\le r-1$. We call $\Ec(\Xi)$ the {\em interpolation entropy}
of $\Xi$.
By standard arguments on the quotient algebra by an ideal defining $r$ points,
it is always possible to find a family of interpolation polynomials of
total degree $\le r-1$. 
Notice that $\Ec(\Xi)$ is related to the condition number of the Vandermonde matrix of the monomial basis of $\AA_{r-1}$ at the points $\Xi$. Thus it depends on the separation of these points.

To prove the unicity of the solution of the minimization problem \eqref{pb:mintv}, we first prove the existence of a dual polynomial. 
\begin{lem}\label{lm:dualpol}
Let $\Xi=\{\xi_{1},\ldots,\xi_{r}\} \subset \T^{n}$ and $\epsilon_{i}\in  \{\pm 1\}$.
Let $\fc \ge 4\, \pi\, r (r-1)\, \Ec(\Xi)$.
There exists $q(\zb)\in \AA_{\fc}$ such that
\begin{itemize}
  \item $q(\xi_{i}) = \epsilon_{i}$ for $i=1,\ldots,r$,
  \item $|q(\zb)|<1$ for $\zb$ in an open dense subset of $\T^{n}$.
\end{itemize}
\end{lem}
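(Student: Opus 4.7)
The plan is to build $q$ as a sum over $i=1,\ldots,r$ of localized bumps centered at each $\xi_{i}$, combining an interpolation polynomial (to prescribe the sign values $\epsilon_{i}$) with a Fej\'er-type kernel (to enforce strict decay of $|q|$ away from $\Xi$). First, invoking the definition of $\Ec(\Xi)$, fix a family of Laurent polynomials $u_{1},\ldots,u_{r}\in\C[\zb^{\pm 1}]$ of total $L_{1}$-degree at most $r-1$ with $u_{i}(\xi_{j})=\delta_{ij}$ and $|u_{i}(\zb)|^{2}\le\Ec(\Xi)$ on $\T^{n}$. Second, introduce a multivariate Fej\'er-type kernel $F_{N}\in\AA_{N}$, for example a tensor product of squared univariate Fej\'er kernels of degree $N$; this kernel is real, nonnegative, normalized so that $F_{N}(\mathbf{1})=1$ with $\mathbf{1}=(1,\ldots,1)$, and obeys a sharp quadratic decay estimate of the form $F_{N}(\zb)\le C/(N\,\mathrm{dist}(\zb,\mathbf{1}))^{2}$ on $\T^{n}$.

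The candidate is
\[
q(\zb)\;:=\;\sum_{i=1}^{r}\epsilon_{i}\,u_{i}(\zb)\,F_{N}\!\bigl(\zb\,\overline{\xi_{i}}\bigr),
\]
with $N$ chosen so that $N+(r-1)\le\fc$, so that $q\in\AA_{\fc}$. The interpolation conditions follow at once: at $\zb=\xi_{j}$ every term with $i\ne j$ vanishes because $u_{i}(\xi_{j})=0$, while the surviving $i=j$ term equals $\epsilon_{j}\cdot 1\cdot F_{N}(\mathbf{1})=\epsilon_{j}$. Hence $q(\xi_{i})=\epsilon_{i}$ for all $i$.

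The main step, and the main obstacle, is the strict uniform bound $|q(\zb)|<1$ on an open dense subset of $\T^{n}$. Using $|u_{i}(\zb)|\le\sqrt{\Ec(\Xi)}$ and the nonnegativity of $F_{N}$,
\[
|q(\zb)|\;\le\;\sqrt{\Ec(\Xi)}\,\sum_{i=1}^{r}F_{N}\!\bigl(\zb\,\overline{\xi_{i}}\bigr),
\]
so it is enough to show that the sum of the $r$ translated kernels is strictly smaller than $1/\sqrt{\Ec(\Xi)}$ off $\Xi$. A packing argument on $\T^{n}$ (at any given $\zb$ at most one $\xi_{i}$ lies in a small prescribed neighbourhood, while the remaining $r-1$ points fall in the far regime where the quadratic decay of $F_{N}$ applies) reduces the task to an estimate of order $(r-1)\,\Ec(\Xi)/N$; the explicit constant $4\pi$ of the hypothesis is precisely the sharp constant coming from the Fej\'er-kernel decay, which fixes the threshold $\fc\ge 4\pi r(r-1)\Ec(\Xi)$. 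The passage from $|q|\le 1$ (which holds everywhere) to the strict inequality $|q|<1$ on an \emph{open dense} subset uses that the decay bound for $F_{N}$ is strict away from $\mathbf{1}$, so that equality in the above estimate is possible only on the finite set $\Xi$. Tracking the sharp constant in the Fej\'er decay on $\T^{n}$ and carrying out the packing argument in a scale-invariant way is the delicate part of the argument.
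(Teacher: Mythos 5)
Your approach is genuinely different from the paper's, but it has a structural gap that I don't see how to close as written. The paper does not build $q$ as a sum of bumps at all; instead it first forms a single real-valued, everywhere-\emph{small} polynomial $U(\zb)=\sum_i v_i u_i(\zb)\overline{u}_i(\zb^{-1})$ (of degree $\le 2(r-1)$, bounded by $rMm$ in modulus), and then composes it with a Chebyshev polynomial $T$ whose extremal points $\zeta_\pm$ sit inside $(-1/(rM),1/(rM))$. The composition $q=T\circ U$ automatically obeys $|q|\le 1$ because $U$ takes values in $[-1,1]$, and it hits $\pm 1$ exactly at the prescribed points because $T(\zeta_{\pm})=\pm 1$; strictness off a thin set is immediate since $U$ is non-constant. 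The amplification from the Chebyshev step is what makes the degree bound $\fc\ge 4\pi r(r-1)\Ec(\Xi)$ come out.

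The gap in your proposal is in the ``near'' regime. Your bound $|q(\zb)|\le\sqrt{\Ec(\Xi)}\sum_i F_N(\zb\,\overline{\xi_i})$ cannot be pushed below $1$ near any $\xi_j$: there $F_N(\zb\,\overline{\xi_j})\approx 1$, the other terms are nonnegative, and by definition $\Ec(\Xi)=\max|u_i|^2\ge |u_j(\xi_j)|^2 = 1$, so the right-hand side is $\gtrsim\sqrt{\Ec(\Xi)}\ge 1$. The packing argument handles the $r-1$ ``far'' kernels but does nothing about the dominant near term, and the worst-case bound $|u_j(\zb)|\le\sqrt{\Ec(\Xi)}$ is simply too lossy right where you need strictness. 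To make an additive Fej\'er-bump construction work one has to impose first-order (vanishing-gradient) interpolation conditions at each $\xi_j$ so that $q$ has a genuine local extremum of value $\pm1$ there, and then run a quantitative perturbation argument — this is exactly what Cand\`es--Fernandez-Granda do in dimensions one and two, and it is considerably more delicate than what you have sketched. The paper's Chebyshev composition avoids this entirely, since $|T|\le 1$ on $[-1,1]$ gives the global bound for free without any second-order control at the nodes.
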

\begin{proof}
  Let $u_{i}(\zb)\in \CC[\zb^{\pm 1}]$, $i=1,\ldots, r$ be a family of interpolation polynomials at $\Xi$, with support in $\AA_{r-1}$ and which reaches $\Ec(\Xi)$. They satisfy the following properties: $U_{i}(\xi_{j})=\delta_{i,j}$ for $1 \le i,j \le r$.
We denote $M=\max_{\zb \in T^{n}}\{|u_{i}(\zb)|^{2}, i=1,\ldots,r\}$.

For $v_{1},\ldots, v_{r} \in \RR$, let 
\begin{equation}
U(\zb)= \sum_{i=1}^{r} v_{i} \, u_{i}(\zb)\overline{u}_{i}(\zb^{-1})\label{eq:polU}
\end{equation}
It is a polynomial with support in $\AA_{\fc}$ and with real values $\sum_{i=1}^{r} v_{i} \, |u_{i}(\zb)|^{2}$ for $\zb\in \T^{n}$.

A direct computation shows that
$$ 
\max_{\zb \in \T^{n}}|U(\zb)| \le r M m
$$
where $m=\max_{i}\{|v_{i}|\}$.
For $m\le {1 \over r M}$ and $\zb \in \T^{n}$, $|U(\zb)| \leq 1$.

Let us choose a Tchebychev polynomial $T(x)$ of degree $d$ big enough so that it has $2$ extremal points
$\zeta_{-}, \zeta_{+} \in ]-{1\over rM}, {1 \over rM}[$
with $T(\zeta_{-})= -1$ and $T(\zeta_{+})= 1$.
We can choose for instance $d$ such that ${\pi\over d}\le {1\over
  2 r M}$, that is $d \ge {2 \pi r M }$.
On the interval $[-1,1]$ outside the roots of $T'(x)=0$,
the norm $|T(x)|$ is strictly less than $1$.

Let $U(\zb)$ be the polynomial \eqref{eq:polU} constructed with $v_{i}=\zeta_{+}$ if $\epsilon_{i}=1$ and $v_{i}=\zeta_{-}$ if $\epsilon_{i}=-1$
and let $q(\zb)= T(U(\zb)) \in \CC[\zb^{\pm 1}]$. Since $u_{i}\in
\AA_{r-1}$ and
$d\ge 2 \pi r \Ec(\Xi)$, we check that $q(\zb)\in \AA_{\fc}$ for $\fc \ge 4 \pi r (r-1) \Ec(\Xi) $.

Then we have $q(\xi_{i})= T(\zeta_{\epsilon_{i}})=\epsilon_{i}$ and for $\zeta\in \T^{n}$, $|q(\zb)|\le 1$ since $|U(\zb)| \in [-1,1]$ and $|T(x)| \le 1$ for $x\in [-1,1]$. Moreover, for $\zb \in \T^{n}$, $|q(\zb)|= 1$ only when $U(\zb)$ reaches a root of $T'(x)$ on $[-1,1]$.
This cannot be the case on a dense open subset of $\T^{n}$, since $U(\zb)$ is a non-constant polynomial. Thus $|q(\zb)|<1$ for $\zb$ in a dense open subset of $\T^{n}$.
\end{proof}

We can now prove the unicity of the minimizer of \eqref{pb:mintv}, by
an argument similar to the one used in
\cite{candes_towards_2014}[Appendix A].

\begin{thm}\label{thm:unicity}
Let $\rho = \sum_{i} \omega_{i}\, \delta_{\xi_{i}}$ be a measure
supported on points $\Xi=\{\xi_{1}, \ldots, \xi_{r}\} \subset \T^{n}$
with $\omega_{i}\in \RR$. Let $\fc
\ge 
2\pi (r-1)\, r \Ec(\Xi)$ and let $g_{1}, \ldots, g_{m}$ be a basis of $\AA_{\fc}$ and
$a_{k}= \int_{\T^{n}} g_k\,d\rho$, $k=1,\ldots,m$. 
The optimal solution of
\begin{equation}
\label{pb:mintv}
  \displaystyle\inf_{\mu\in \M(\T^{n})}\,\{\,\Vert \mu\Vert_{TV}:\quad  \int_{\T^{n}} g_k\,d\mu \,=\,a_k,\quad k=1,\ldots,m\,\}
\end{equation}
is the measure $\rho$.
\end{thm}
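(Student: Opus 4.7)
The plan is to run the dual-certificate argument of Cand\`es and Fernandez-Granda in the multivariate torus setting, with Lemma \ref{lm:dualpol} supplying the certificate. Feasibility of $\rho$ is immediate. Given any optimal $\hat{\mu}\in\M(\T^{n})$, set $h:=\hat{\mu}-\rho$; since $g_{1},\ldots,g_{m}$ is a basis of $\AA_{\fc}$ and the $m$ moments agree, one has $\int f\,dh=0$ for every $f\in\AA_{\fc}$. I would then decompose $h=h_{T}+h_{T^{c}}$ with respect to $T:=\Xi$ (so $h_{T}$ and $h_{T^{c}}$ are mutually singular), writing $h_{T}=\sum_{i=1}^{r}\beta_{i}\,\delta_{\xi_{i}}$ with $\beta_{i}\in\RR$, and assume without loss of generality that all $\omega_{i}\neq 0$.

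Next I would apply Lemma \ref{lm:dualpol} with $\epsilon_{i}:=\mathrm{sign}(\omega_{i})$ to obtain $q\in\AA_{\fc}$ with $q(\xi_{i})=\epsilon_{i}$, $|q|\le 1$ on $\T^{n}$, and $|q|<1$ on a dense open subset. Since $q\in\AA_{\fc}$, the orthogonality $\int q\,dh=0$ rewrites as $\sum_{i}\epsilon_{i}\beta_{i}=-\int_{T^{c}}q\,dh_{T^{c}}$. Using $\|\hat{\mu}\|_{TV}=\sum_{i}|\omega_{i}+\beta_{i}|+\|h_{T^{c}}\|_{TV}$, the elementary bound $|\omega_{i}+\beta_{i}|\ge |\omega_{i}|+\epsilon_{i}\beta_{i}$, and $\int_{T^{c}}q\,dh_{T^{c}}\le \|h_{T^{c}}\|_{TV}$, I obtain the chain
\[
\|\hat{\mu}\|_{TV}\,\ge\,\|\rho\|_{TV}+\Bigl(\|h_{T^{c}}\|_{TV}-\int_{T^{c}}q\,dh_{T^{c}}\Bigr)\,\ge\,\|\rho\|_{TV},
\]
and optimality of $\hat{\mu}$ collapses both inequalities to equalities.

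The saturated equality $\int_{T^{c}}q\,dh_{T^{c}}=\|h_{T^{c}}\|_{TV}$ forces $|h_{T^{c}}|$ to be supported on the closed set $E:=\{\zb\in\T^{n}:|q(\zb)|=1\}$, which has empty interior by the lemma. The main obstacle is to upgrade this to $h_{T^{c}}=0$: a single certificate controls only one such $E$, and in the multivariate case $E$ may contain algebraic strata away from $\Xi$. I would attack this by exploiting the freedom in the construction of $q$ inside the proof of Lemma \ref{lm:dualpol} (the Chebyshev degree $d$, the scaling parameters $\zeta_{\pm}$, and the choice of interpolants $u_{i}$): perturbing these inputs yields a parametric family of valid certificates $(q_{\lambda})$ whose equality sets $E_{\lambda}$ share common intersection exactly $\Xi$, so that any putative atom $\tau\in\mathrm{supp}(h_{T^{c}})\setminus\Xi$ is ruled out by at least one member of the family by the very same chain of inequalities applied to $q_{\lambda}$.

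Once $h_{T^{c}}=0$ has been secured, the orthogonality reduces to $\sum_{i=1}^{r}\beta_{i}\,u(\xi_{i})=0$ for every $u\in\AA_{\fc}$. Since $\AA_{\fc}\supseteq \AA_{r-1}$ contains the interpolation family $u_{1},\ldots,u_{r}$ from Lemma \ref{lm:dualpol} (with $u_{j}(\xi_{i})=\delta_{ij}$), testing against $u_{j}$ forces $\beta_{j}=0$ for each $j$; equivalently, this is the (well-conditioned, thanks to finiteness of $\Ec(\Xi)$) Vandermonde system at the $r$ distinct points of $\Xi$. Hence $h\equiv 0$ and $\hat{\mu}=\rho$, proving uniqueness.
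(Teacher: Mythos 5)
Your proposal follows the same dual-certificate strategy as the paper's proof. Two cosmetic differences: you take $\epsilon_i=\mathrm{sign}(\omega_i)$ and use the one-sided bound $|\omega_i+\beta_i|\ge|\omega_i|+\epsilon_i\beta_i$, whereas the paper sets $\epsilon_i=\mathrm{sign}(h(\xi_i)\omega_i)$ (the sign of the singular part $\nu_\Xi$ of the perturbation, produced via Radon--Nikodym) so that $\int q\,d\nu_\Xi=\|\nu_\Xi\|_{TV}$ holds exactly; and you arrange the estimates as an equality chain, while the paper drives a strict-inequality contradiction. Your closing step (testing $\sum_i\beta_i u(\xi_i)=0$ against the interpolants $u_j$) is a small streamlining of the paper's final line. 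Up to the decisive step the two derivations are equivalent.

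That decisive step---the one you call the ``main obstacle''---is genuinely incomplete in your proposal. From $\int q\,dh_{T^c}=\|h_{T^c}\|_{TV}$ you learn only that $|h_{T^c}|$ lives on $E:=\{\zb\in\T^n:|q(\zb)|=1\}$, and Lemma~\ref{lm:dualpol} gives only that $E$ has empty interior, not $E\subset\Xi$. Your fix---a family $(q_\lambda)$ with $\bigcap_\lambda\{|q_\lambda|=1\}=\Xi$---is the right idea but remains a sketch: one must still exhibit the perturbations, check that each $q_\lambda$ stays in $\AA_{\fc}$ with $q_\lambda(\xi_i)=\epsilon_i$ and $|q_\lambda|\le 1$ on $\T^n$ (for the same $\fc$), and verify the intersection. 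Until that is done the argument is open. For perspective, the paper's own proof takes exactly this step implicitly, asserting that ``$|q|<1$ on a dense open subset'' forces $|\int q\,d\nu_c|<\|\nu_c\|_{TV}$ whenever $\nu_c\neq 0$; that implication is not valid for a signed measure $\nu_c$ concentrated on the Lebesgue-null but nonempty set $\{|q|=1\}\setminus\Xi$. So your proposal matches the paper in strategy and level of rigor, and has the merit of naming the missing step. To close either version one would need either a strengthened Lemma~\ref{lm:dualpol} giving $|q(\zb)|<1$ for every $\zb\notin\Xi$, or the family-of-certificates argument carried out in full.
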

\begin{proof}
Let $\rho^{*}$ be the optimal solution of \eqref{pb:mintv}. It 
can be decomposed as $\rho^{*}=\rho + \nu$. The Lebesgue decomposition of $\nu$ at $\rho$ (see \cite{RudinRealComplexAnalysis1986}[Theorem 6.9]) is of the form $\nu=\nu_{\Xi} + \nu_{c}$
where $\nu_{\Xi}$ is supported on $\Xi$ and $\nu_{c}$ is supported on $\T^{n}\setminus \Xi$.

By Radon-Nykodim Theorem \cite{RudinRealComplexAnalysis1986}[Theorem 6.9], $\nu_{\Xi}$ has a density function $h$ with respect to $\rho$.
Let $\epsilon_{i}= sign(h(\xi_{i}) \omega_{i}) \in \{\pm 1\}$,
$i=1,\ldots, r$ and let $q(\zb)\in \CC[\zb^{\pm 1}]$ be the polynomial constructed from $\Xi$ and $\epsilon_{1}, \ldots, \epsilon_{r}$ as in Lemma \ref{lm:dualpol}.
We have 
$$ 
\int_{\T^{n}} q(\zb) d\nu_{\Xi} 
= \sum_{i=1}^{r} \epsilon_{i} h(\xi_{i}) \omega_{i}
= \sum_{i=1}^{r} |h(\xi_{i})| |\omega_{i}| = ||\nu_{\Xi}||_{TV}.
$$
Since the moments of monomials in $A$ are the same for
$\rho$ and $\rho^{*}$, we have
\begin{equation} \label{eq:intzero}
0 = \int_{\T^{n}} q(\zb) d\nu = 
\int_{\T^{n}} q(\zb) d\nu_{\Xi}
+\int_{\T^{n}} q(\zb) d\nu_{c}
= ||\nu_{\Xi}||_{TV} +\int_{\T^{n}} q(\zb) d\nu_{c}
\end{equation}

Since $|q(\zb)|< 1$ on a dense open subset of $\T^{n}$, if $\nu_{c}\neq 0$ then
$|\int_{\T^{n}} q(\zb) d\nu_{c}|< ||\nu_{c}||_{TV}$ and $ ||\nu_{\Xi}||_{TV}< ||\nu_{c}||_{TV}$.

Assuming $\nu_{c}\neq 0$, we have 
\begin{eqnarray*}
\lefteqn{||\rho||_{TV}} \\
 & \ge & ||\rho^{*}||_{TV}= ||\rho+\nu||_{TV}  =  ||\rho+\nu_{\Xi}||_{TV} + ||\nu_{c}||_{TV}\\
&\ge& ||\rho||_{TV}- ||\nu_{\Xi}||_{TV} + ||\nu_{c}||_{TV} > ||\rho||_{TV}
\end{eqnarray*}
This is a contradiction. Thus $\nu_{c}=0$, which implies by \eqref{eq:intzero} that $||\nu_{\Xi}||_{TV}=0$ and that $\nu=0$ and $\rho^{*}=\rho$.
\end{proof}

We can now prove that the optimal solution of the super-resolution
problem \eqref{a3} yields the coefficients and exponents of the sparse
polynomial, provided enough moments are known.
\BMe

\begin{thm}\label{inter-super}
Let $g^*\in\R[\z]$, $\x\mapsto g^*(\z):=\sum_\beta g^*_\beta\z^\beta$, be an unknown real polynomial.
Let $\Gamma:=\{\beta\in\N^n: g^*_\beta\neq0\}$ and $r:=\vert\Gamma\vert$.
Let $\varphi\in\T^n$ be as in (\ref{zedezero-1}) or in (\ref{zedezero}) 
(in which case $N>\displaystyle\max_{i=1,\ldots,n}\max\{\beta_i:\beta\in\Gamma\}$),
and 
$\sigma_\alpha=g(\varphi^\alpha)$, $\alpha\in\mathscr{A}_{\fc}$.
Let $\delta = \Ec(\varphi^{\Gamma})=\Ec(\{\varphi^{\alpha}\mid \alpha \in \Gamma\})$.
There is a constant $C>0$ (that depends only on $r$)
such that if $\fc \ge C\, \delta$ then the optimization problem (\ref{a3}) has a unique optimal solution $\mu^*$ such that
\begin{equation}
\label{carac}
\mu^*\,:=\,\sum_{\beta\in \Gamma} g^*_\beta\,\delta_{\varphi^\beta}\quad\mbox{and}\quad\Vert\mu^*\Vert_{TV}=
\sum_\beta\vert g^*_\beta\vert.
\end{equation}
In addition, there is no duality gap (i.e., $\rho_{\fc}=\rho^*_{\fc}$),  (\ref{a3-dual}) has an optimal solution $g^*\in\C[\x;\mathscr{A}_{\fc}]$, and 
\begin{equation}
\label{complementarity-cond}
g^{*+}_\beta\,>\,0\,\Rightarrow \Re(g^*(\varphi^\beta))\,=1;\quad
g^{*-}_\beta\,>\,0\,\Rightarrow \Re(g^*(\varphi^\beta))\,=-1.
\end{equation}

\end{thm}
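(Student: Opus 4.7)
The plan is to reduce Theorem \ref{inter-super} to Theorem \ref{thm:unicity} via the crucial observation (\ref{values})--(\ref{equiv-measure}) established just above. First I would set $\mu_{g^*,\varphi} := \sum_{\beta\in\Gamma} g^*_\beta\,\delta_{\varphi^\beta}$. The choice of $\varphi$ (either (\ref{zedezero-1}), or (\ref{zedezero}) with $N$ larger than any coordinate of any exponent in $\Gamma$) makes the map $\beta\mapsto \varphi^\beta$ injective on $\Gamma$, so the atoms form an $r$-point set $\Xi:=\varphi^\Gamma\subset\T^n$, and $\Vert\mu_{g^*,\varphi}\Vert_{TV}=\sum_\beta |g^*_\beta|$. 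By (\ref{values}) the moments of $\mu_{g^*,\varphi}$ match $\sigma_\alpha$ for every $\alpha\in\mathscr{A}_\fc$, so $\mu_{g^*,\varphi}$ is feasible for (\ref{a3}).

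Next, pick $C:=2\pi r(r-1)$. With this choice the hypothesis $\fc\ge C\delta = 2\pi r(r-1)\,\Ec(\Xi)$ is exactly the threshold required by Theorem \ref{thm:unicity}, applied with $\rho=\mu_{g^*,\varphi}$ and with the monomials $\{\z^\alpha\}_{\alpha\in\mathscr{A}_\fc}$ as a basis of $\AA_\fc$. Theorem \ref{thm:unicity} then yields uniqueness of the minimizer of (\ref{a3}), which must therefore equal $\mu_{g^*,\varphi}$; this is exactly (\ref{carac}).

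For the duality part I would rely on the zero-duality-gap statement $\rho_\fc=\rho^*_\fc$ asserted immediately after (\ref{a33}), whose proof is a direct transcription of the weak-$*$ compactness / Hahn-Banach argument of De Castro et al.\ \cite{de_castro_exact_2017} (it goes through on the compact torus $\T^n$ with no modification). An explicit dual optimizer is then produced by applying Lemma \ref{lm:dualpol} to $\Xi=\varphi^\Gamma$ with signs $\epsilon_i:=\mathrm{sign}(g^*_{\beta_i})$: the resulting $q^*\in\AA_\fc$ satisfies $|q^*(\z)|\le 1$ on $\T^n$ (hence $\Vert\Re(q^*)\Vert_\infty\le 1$) and $q^*(\varphi^{\beta_i})=\epsilon_i\in\{\pm 1\}\subset\R$, so by the pairing $\sigma^T q^* = \int q^*\,d\mu_{g^*,\varphi} = \sum_\beta g^*_\beta\,q^*(\varphi^\beta) = \sum_\beta |g^*_\beta| = \rho_\fc$ it is optimal. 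The complementarity (\ref{complementarity-cond}) is then immediate from the decomposition $g^*_\beta = g^{*+}_\beta - g^{*-}_\beta$: $g^{*+}_\beta>0$ forces $g^*_\beta>0$, hence $q^*(\varphi^\beta)=1$ and $\Re(q^*(\varphi^\beta))=1$, and symmetrically for $g^{*-}_\beta>0$.

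The main obstacle, and essentially the only nontrivial ingredient, is Theorem \ref{thm:unicity} itself, which has already been established; everything else is bookkeeping. The most delicate points of that bookkeeping are verifying that the atoms $\varphi^\beta$ are distinct so that $|\Xi|=r$ (which fixes the constant $C$ in terms of $r$), matching the real-valued primal $\mathrm{sign}(g^*_\beta)$ with the $\{\pm 1\}$-valued signs driving Lemma \ref{lm:dualpol}, and transferring between the measure formulation and the coefficient pairing $\sigma^T q$ that appears in the dual (\ref{a3-dual}).
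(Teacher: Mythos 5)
Your reduction to Theorem~\ref{thm:unicity} via the atomic measure $\mu_{g^*,\varphi}$ is exactly the route the paper takes, and the first half of your argument (feasibility, injectivity of $\beta\mapsto\varphi^\beta$ on $\Gamma$, invocation of uniqueness) matches the paper's proof step for step. One small caveat on the constant: you take $C=2\pi r(r-1)$, reading off the threshold as it is \emph{stated} in Theorem~\ref{thm:unicity}; the paper's own proof of Theorem~\ref{inter-super} takes $C=4\pi r(r-1)$, which is the bound Lemma~\ref{lm:dualpol} actually requires (the $2\pi$ in the statement of Theorem~\ref{thm:unicity} appears to be a typo, since its proof is driven by Lemma~\ref{lm:dualpol}). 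Your choice thus inherits an inconsistency already present in the paper; since the theorem only asserts \emph{some} constant depending on $r$, this is cosmetic, but to be safe one should use $4\pi r(r-1)$.

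For the duality part you take a genuinely different and somewhat cleaner route. The paper appeals to the De~Castro--Gamboa--Henrion--Lasserre strong-duality result to assert $\rho_\fc=\rho^*_\fc$ and the existence of an abstract dual optimizer $f^*$, and then derives~(\ref{complementarity-cond}) by the standard complementary-slackness identity
\[
\Vert\mu^*\Vert_{TV}=\int\Re(1-f^*)\,d\mu^+\;+\;\int\Re(1+f^*)\,d\mu^-\;+\;\Re(\a^T\f^*),
\]
showing that $\mu^\pm$ concentrate on the zero sets of $\Re(1\mp f^*)$. You instead build a \emph{concrete} dual certificate $q^*$ from Lemma~\ref{lm:dualpol} with $\epsilon_i=\mathrm{sign}(g^*_{\beta_i})$ and verify directly that it is feasible and attains $\Re(\sigma^T q^*)=\sum_\beta|g^*_\beta|=\rho_\fc$; by weak duality this simultaneously re-proves $\rho_\fc=\rho^*_\fc$ and exhibits a dual optimizer, so the reliance on the De~Castro et al.\ argument for existence of the optimizer is not actually needed. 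The one difference in scope is that the paper's complementarity computation applies to \emph{any} dual optimizer, whereas yours establishes~(\ref{complementarity-cond}) only for the constructed $q^*$; since the theorem is phrased existentially this is sufficient. You also avoid the paper's unfortunate notational collision of writing $g^*$ for both the unknown primal polynomial and the dual optimizer.

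Net: correct, same core reduction, with a more explicit and self-contained treatment of the dual side; just fix the constant to $4\pi r(r-1)$ to be consistent with what Lemma~\ref{lm:dualpol} delivers.
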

\proof
Of course the measure $\mu^*$ in (\ref{carac}) is feasible for (\ref{a3}). From the definition of $N$ and $\Gamma$,
all points $(\varphi^\alpha)\subset\T^n$, $\alpha\in\Gamma$, are distinct whenever $\varphi$ is chosen as in (\ref{zedezero}) or
in (\ref{zedezero-1}).

Moreover, by Theorem \ref{thm:unicity}, under the condition $\fc \ge 4 \pi r (r-1) \Ec(\varphi^{\Gamma})= C \Ec(\varphi^{\Gamma})$ with $C= 4 \pi r (r-1)$, the optimal solution of (\ref{a3}) is unique and 
is the sparse measure on $\T^n$ that satisfies the moment conditions of (\ref{a3}), i.e., $\mu^*$.

Next, write the optimal solution $\mu^*$ of (\ref{a3}) as $\mu^*=\mu^+-\mu^-$ for two signed Borel measures 
$\mu^+,\mu^-\in\mathscr{M}(\T_n)_+$, i.e.,
\[\mu^+=\sum_{\beta}g_\beta^{*+}\,\delta_{\varphi^\beta};\quad
\mu^-=\sum_{\beta}g_\beta^{*-}\,\delta_{\varphi^\beta}.\]
We have already mentioned that from
\cite{de_castro_exact_2017}, the optimal values of
(\ref{a3}), (\ref{a3-dual}) and (\ref{a33}) are the same,  i.e., $\rho_{\fc}=\rho_{\fc}^*$, and
therefore the measures $\mu^+$ and $\mu^-$ are optimal solutions of (\ref{a33}).
Let $f^*$ be an optimal solution of (\ref{a3-dual}). One relates
$\mu^*$ and $f^*$ has follows.  
As $\rho_{\fc}=\Vert\mu^*\Vert_{TV}=\rho^*_c=\Re(\a^T\f^*)$, 
\begin{eqnarray*}
\Vert\mu^*\Vert_{TV}&=&\int_{\T^n} d(\mu^++\mu^-)\\
&=&\int_{\T^n}\underbrace{\Re(1-f^*)}_{\geq0}\,d\mu^+
\int_{\T^n}\underbrace{\Re(1+f^*)}_{\geq0}\,d\mu^-+\underbrace{\int_{\T^n}\Re(f^*)\,d(\mu^+-\mu^-)}_{=\Re(\a^T\f^*)}
\end{eqnarray*}
it follows that $\mu^+$ (resp. $\mu^-$) is supported on the zeros of $\Re(1-f^*)$ (resp. $\Re(1+f^*)$) on $\T^n$. $\qed$
\endproof
Therefore to recover $r$ points one needs at most $(2r\,C_n+1)^n$ evaluations.

\subsection{A hierarchy of SDP relaxations for solving the super-resolution problem}
Recall that in the super-resolution model described in Cand\`es and Fernandez-Granda \cite{candes_towards_2014}, 
one has to make evaluations in the multivariate case at all points $\varphi^\alpha$ with
\begin{equation}
\alpha\in\,\mathscr{A}_{\fc}:= \mathscr{A}_{\fc}^{\infty}\,=\,\{-\fc,-(\fc-1),\ldots,-1,0,1,\ldots,(\fc-1),\fc\}^n = \{ \alpha ~|~ \|\alpha\|_{\infty} \leqslant \fc\} \subset\mathbb{Z}^n,
\end{equation}
where $\| \alpha \|_{\infty} := \max \{ |\alpha_1| , \hdots , |\alpha_n| \}$. This makes perfect sense in such applications as image reconstruction from measurements (typically 2-dimensional objects)
of signal processing. However, for polynomial interpolation $\vert\mathscr{A}_{\fc}\vert$ is rapidly prohibitive if
one consider polynomials of say $n=10$ variables.
Indeed, if $n=10$ then the first order semidefinite program of the hierarchy entails matrix variables of size $1,024 \times 1024$. Bear in mind that currently, semidefinite programming solvers are limited to matrices of size a few hundred. Thus it is not possible to compute \textit{even} the first order relaxation! 

We propose to reduce the computational burden by using the one-norm truncation, i.e. $\| \alpha \|_{1} := |\alpha_1| + \hdots + |\alpha_n|$, by making evaluations at all points $\varphi^\alpha$ with
\begin{equation}
\alpha \in\,\mathscr{A}^1_{\fc}\,:=\,\{\alpha-\beta~|~ \alpha,\beta \in \mathbb{N}^n, ~ \|\alpha\|_1 , \|\beta\|_1 \leqslant \fc \}.
\end{equation}

\begin{figure}[!h]
	\centering
	\includegraphics[width=.6\textwidth]{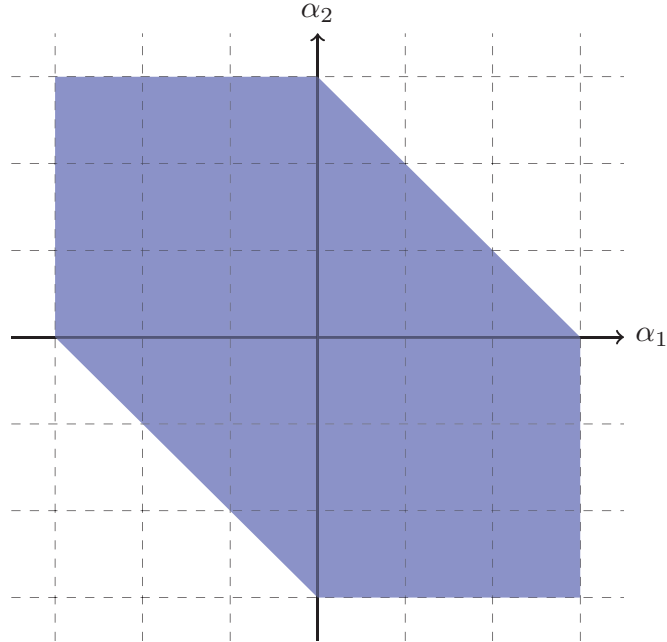}
	\caption{Evaluations at $\alpha-\beta$ with $|\alpha_1|+|\alpha_2| \leqslant 3$ and $|\beta_1|+|\beta_2| \leqslant 3$ and $\alpha_1,\alpha_2,\beta_1,\beta_2 \in \mathbb{N}$.}
\label{fig:eval1}
\end{figure}

An illustration is provided in Figure \ref{fig:eval1}. In addition:
\begin{equation}
\label{eq:tilde}
\forall \fc \in \mathbb{N}, ~~ \exists \tilde{\fc} \in \mathbb{N}:  ~~ \forall l \geqslant \tilde{\fc}, ~~~~ \mathscr{A}_{\fc} \subset \mathscr{A}^1_{l}.
\end{equation}
Thus, all the theoretical results of \cite{candes_towards_2014} are preserved. To appreciate the gain in using $\mathscr{A}^1_{\fc}$, for 10 variables, the first order semidefinite program of the hierarchy entails matrix variables of size $11 \times 11$ (instead of $1,024 \times 1,024$) and $56$ linear equalities. It is thus possible to compute the first order relaxation. The second order relaxation entails matrices of size $66 \times 66$ (instead of $59,049 \times 59,049$) and $1,596$ linear  equalities.

Notice also that Toeplitz Prony of Section \ref{subsub:Toeplitz Prony} uses $\mathscr{A}^1_{\fc}$ instead of $\mathscr{A}_{\fc}$ and is guaranteed to recover the optimal solution provided that $\fc$ is large enough.

As the super-resolution problem (\ref{a3}) is concerned with atomic measures finitely supported on the multi-dimensional torus
$\T^n$, we can adapt to the torus $\T^n$  the hierarchy of semidefinite programs
defined in De Castro et al. \cite{de_castro_exact_2017} for solving super-resolution problems with atomic measures 
(finitely supported) on semi-algebraic subsets of $\R^n$. 
For every fixed $\fc$, at step $d\geq \fc$ of the hierarchy,  the semidefinite program to solve reads:
\begin{equation}
\label{sdp-Toeplitz}
(\textbf{P}_{d,\fc}) \qquad  \qquad \qquad \qquad
\begin{array}{ll}
\rho_{d,\fc}\,=\displaystyle\inf_{y^+,y^-} & y^+_{0} + y^-_{0} \\[.5em]
\text{s.c.} & y^+_{\alpha} - y^-_{\alpha} = \sigma_{\alpha}~,~~~ \forall \alpha\in\mathscr{A}^1_{\fc}\\[.5em]
&\mathbf{T}_{d}(\y^+)\,\succeq\,0,\:\mathbf{T}_{d}(\y^-)\,\succeq\,0,
\end{array} \qquad \qquad \qquad \qquad
\end{equation}
where the Hermitian matrix $\mathbf{T}_{d}(y^+)$ has its rows and columns indexed in $\{ \alpha ~|~\|\alpha\|_1 \leqslant d \}$
and $\mathbf{T}_{d}(\y^+)_{\alpha,\beta}=y^+_{\beta-\alpha}$, for every $\|\alpha\|_1,\|\beta\|_1 \leqslant d$, and
similarly for the Hermitian matrix $\mathbf{T}_{d}(\y^-)$. In the univariate case $\mathbf{T}_{d}(\y^+)$
$\mathbf{T}_{d}(y^-)$ are Toeplitz matrices. When $\y^+$ is coming from a measure $\mu^+$ on $\T^n$
then $y^+_{\alpha}=\int_{\T^n}\z^{\alpha}\,d\mu^+(\z)$.
Clearly, (\ref{sdp-Toeplitz}) is a relaxation of (\ref{a33}) and so 
$\rho_d\leq \rho_{\fc}$ for all $d\geq \fc$. Moreover $\rho_d\leq\rho_{d+1}$ for all $d$.

Note that with the above notations, the ``Toeplitz Prony'' method proposed in Section \ref{subsub:Toeplitz Prony} consists in directly extracting a measure from the matrix $\mathbf{T}_{d}(a)$. In constrast, the super resolution approach consists of decomposing it into $\mathbf{T}_{d}(a) = \mathbf{T}_{d}(\y^+) - \mathbf{T}_{d}(\y^-)$, optimizing over $\y^+$ and $\y^-$, and then applying the ``Toeplitz Prony'' method to $\mathbf{T}_{d}(\y_*^+)$ and $\mathbf{T}_{d}(\y_*^-)$ at an optimal solution $(\y_*^+,\y_*^-)$ of \eqref{sdp-Toeplitz}.

\begin{lem}
For each $d\geq \fc$ the (complex) semidefinite program ($\textbf{P}_{d,\fc}$) in \eqref{sdp-Toeplitz} has an optimal solution $(\y^+,\y^-)$. 
In addition, if the rank conditions
\begin{eqnarray}
\label{rank+}
\text{rank} (\mathbf{T}_{d}(\y^+))&=&\text{rank} (\mathbf{T}_{d-2}(\y^+))\\
\label{rank-}
\text{rank} (\mathbf{T}_{d}(\y^-))&=&\text{rank} (\mathbf{T}_{d-2}(\y^-))
\end{eqnarray}
are satisfied then there exist two Borel atomic measures $\mu^+$ and $\mu^-$ on $\T^n$ such that:
\begin{equation}
\label{atoms}
y^+_{\alpha}\,=\,\int_{\T^n}\z^\alpha\,d\mu^+(\z)\quad\mbox{and}\quad
y^-_{\alpha}\,=\,\int_{\T^n}\z^\alpha\,d\mu^-(\z),\quad\forall\alpha\in\mathscr{A}^1_{d}.\end{equation}
The support of $\mu^+$ (resp. $\mu^-$) consists of ${\rm rank}(\mathbf{T}_{d}(\y^+))$ 
(resp. ${\rm rank}(\mathbf{T}_{d}(\y^-)))$  atoms on $\T^n$ which can be extracted by a numerical algebra routine (e.g. the Prony method described in Section \ref{subsub:Hankel Prony}).

In addition, if \eqref{rank+}-\eqref{rank-} hold for an optimal solution of ($\textbf{P}_{d,\tilde{\fc}}$) with $\tilde{\fc}$ as in \eqref{eq:tilde}, then under the separation conditions of Theorem \ref{inter-super}, the Borel measure 
$\mu^*:=\mu^+-\mu^-$ is the unique optimal solution of (\ref{a3}).
\end{lem}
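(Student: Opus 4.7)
The statement breaks into three parts: (i) existence of an optimal pair $(\y^+,\y^-)$ for the SDP $(\mathbf{P}_{d,\fc})$; (ii) reconstruction of atomic representing measures $\mu^\pm$ under the flat extension rank conditions; (iii) identification of $\mu^+-\mu^-$ with the unique optimal solution of \eqref{a3}.

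For (i), I would first check the feasible set is non-empty: the Jordan decomposition of the sparse measure $\mu^{\star}=\sum_{\beta\in\Gamma}g^{*}_\beta\,\delta_{\varphi^\beta}$ produced by Theorem \ref{inter-super} yields two positive atomic measures on $\T^n$ whose truncated moment sequences are feasible and automatically satisfy $\mathbf{T}_d(\y^\pm)\succeq 0$. Coercivity of the objective on the feasible set follows from the classical observation that for any PSD Hermitian Toeplitz matrix the $2\times 2$ principal submatrix indexed by $0$ and $\alpha$ is PSD, forcing $\vert y^\pm_\alpha\vert\le y^\pm_0$ for every $\alpha\in\mathscr{A}^1_d$. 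Hence sublevel sets of $y^+_0+y^-_0$ are compact; closedness of the PSD cone together with the affine moment constraints yields a minimizer.

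For (ii), the rank conditions \eqref{rank+}--\eqref{rank-} are precisely the flat-extension conditions for Hermitian Toeplitz matrices on the torus established in \cite{cedric}, which is the Toeplitz/torus analogue of the Curto--Fialkow theorem \cite[Theorem 1.1]{curto}. Invoking that result at the optimizer provides two atomic measures $\mu^\pm$ on $\T^n$ whose moments agree with $\y^\pm$ on the full index set $\mathscr{A}^1_d$, with exactly $\mathrm{rank}(\mathbf{T}_d(\y^\pm))$ atoms each. The atoms are the common zeros of the kernel of $\mathbf{T}_d(\y^\pm)$ and can be read off via the Toeplitz-Prony procedure of Section \ref{subsub:Toeplitz Prony}: form the shifted Toeplitz blocks, deduce multiplication matrices in the associated Artinian Gorenstein quotient algebra via SVD, and diagonalize a generic linear combination to recover the atoms and then the weights.

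For (iii), suppose \eqref{rank+}--\eqref{rank-} hold at an optimizer of $(\mathbf{P}_{d,\tilde{\fc}})$ with $\tilde{\fc}$ as in \eqref{eq:tilde}, so that $\mathscr{A}_{\fc}\subset\mathscr{A}^1_{\tilde{\fc}}$. The equality constraint $y^+_\alpha-y^-_\alpha=\sigma_\alpha$ combined with (ii) yields $\int_{\T^n}\z^\alpha\,d(\mu^+-\mu^-)=\sigma_\alpha$ for every $\alpha\in\mathscr{A}^1_{\tilde{\fc}}$, and a fortiori for every $\alpha\in\mathscr{A}_{\fc}$, so $\mu^+-\mu^-$ is feasible for \eqref{a3}. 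Since $(\mathbf{P}_{d,\tilde{\fc}})$ is a relaxation of \eqref{a33}, the triangle inequality gives
\[
\Vert\mu^+-\mu^-\Vert_{TV}\le \Vert\mu^+\Vert_{TV}+\Vert\mu^-\Vert_{TV}=y^+_0+y^-_0=\rho_{d,\tilde{\fc}}\le\rho_{\tilde{\fc}}.
\]
The separation bound $\fc\ge C\delta$ of Theorem \ref{inter-super} is monotone in $\fc$, hence also holds at $\tilde{\fc}\ge\fc$, so Theorem \ref{inter-super} applied at level $\tilde{\fc}$ gives $\rho_{\tilde{\fc}}=\Vert\mu^{\star}\Vert_{TV}$ with unique optimizer $\mu^{\star}$; combining these inequalities forces $\mu^+-\mu^-=\mu^{\star}$. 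The main obstacle is step (ii): importing from \cite{cedric} a flat-extension theorem for Hermitian Toeplitz matrices that guarantees not only finite-rank representability but also localization of the support on $\T^n$ rather than on $\C^n$ is the key non-trivial technical ingredient; this localization is what the Hermitian Toeplitz symmetry of $\mathbf{T}_d(\y^\pm)$ is used for.
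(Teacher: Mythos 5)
Your proposal is correct and follows essentially the same route as the paper's proof: boundedness from the Toeplitz/PSD structure (giving $\vert y_\alpha\vert\le y_0$) for existence of a minimizer, the flat-extension theorem of \cite{cedric} for atomic representability under \eqref{rank+}--\eqref{rank-}, and Theorem \ref{inter-super} together with the $TV$/$\ell_1$ inequality chain to identify $\mu^+-\mu^-$ with the unique optimizer of \eqref{a3}. The only notable difference is that your part (iii) spells out the chain $\Vert\mu^+-\mu^-\Vert_{TV}\le y^+_0+y^-_0=\rho_{d,\tilde{\fc}}\le\rho_{\tilde{\fc}}$ and the monotonicity-of-uniqueness argument explicitly, whereas the paper leaves this as a one-line appeal to Theorem \ref{inter-super}.
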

\proof
Consider a minimizing sequence $(\y^{+,\ell},\y^{-,\ell})_{\ell\in\N}$ of (\ref{sdp-Toeplitz}). 
Since one minimizes $y^+_{0}+y^-_{0}$ one has
$y^{+,\ell}_{0}+y^{-,\ell}_{0}\leqslant y^{+,1}_{0}+y^{-,1}_{0}=:\rho$, for $\ell\geq1$.
The Toeplitz-like structure of $\mathbf{T}_{d}(\y^{+,\ell})$ and the psd constraint $\mathbf{T}_{d}(\y^{+,\ell})\succeq0$ imply
$\vert y^{+,\ell}_{\alpha}\vert \leqslant \rho$ for all $\alpha\in\mathscr{A}^1_{d}$; and similarly
$\vert y^{-,\ell}_{\alpha}\vert \leqslant \rho$ for all $\alpha\in\mathscr{A}^1_{\fc}$. Hence there is a subsequence $(\ell_k)$
and two vectors $\y^+=(y^+_\alpha)_{\alpha\in\mathscr{A}^1_{\fc}}$ and
$\y^-=(y^-_\alpha)_{\alpha\in\mathscr{A}^1_{\fc}}$,
such that
\[\lim_{k\to\infty}\y^{+,\ell_k}\,=\,\y^+
\quad\mbox{and}\quad\lim_{k\to\infty}\y^{-,\ell_k}\,=\,\y^-.\]
In addition, from the above convergence it also follows that
$(\y^+,\y^-)$ is a feasible solution of (\ref{sdp-Toeplitz}), hence an optimal solution
of (\ref{sdp-Toeplitz}).

Next, in the univariate case, a Borel measure $\mu^+$ and $\mu^-$ on $\T$ can always be extracted from the
semidefinite positive Toeplitz matrices $\mathbf{T}_{d}(\y^+)$ and $\mathbf{T}_{d}(\y^-)$ respectively.
This is true regardless of the rank conditions (\ref{rank+})-(\ref{rank-}) and was proved in \cite[p. 211]{iohvidov1982}.
In the multivariate case, $\mathbf{T}_{d}(\y^+)$ and
$\mathbf{T}_{d}(\y^-)$ are  Toeplitz-like matrices, and we may and will invoke the recent result \cite[Theorem 5.2]{cedric}. (Note that this is true for Toeplitz matrices, but {\it not} for general Hermitian matrices for which additional non-trivial conditions must be satisfied (see \cite[Theorem 5.1]{cedric}).)
It implies that a Borel measure $\mu^+$ (resp. $\mu^-$) on $\T^n$ can be extracted 
from a multivariate semidefinite positive Toeplitz-like matrix $\mathbf{T}_{d}(\y^+)$
(resp. $\mathbf{T}_{d}(\y^-)$) if the rank condition (\ref{rank+}) (resp. (\ref{rank-})) holds. Hence we have proved (\ref{atoms}).
 
Finally the last statement follows from Theorem \ref{inter-super} and the fact that (\ref{a3}) and (\ref{a33}) 
have the same optimal value and an optimal solution $(\mu^+,\mu^-)$ of (\ref{a33}) provides and optimal solution
$\mu^*=\mu^+-\mu^-$ of (\ref{a3}). $\Box$
\endproof

\noindent
{{\bf Asymptotics as $d$ increases.}
In case the conditions \eqref{rank+}-\eqref{rank-} do not hold, we still have the following asymptotic result at an optimal solution.

\begin{lem}
Assume that $\fc$ satisfies the conditions of Theorem \ref{inter-super} and let $\tilde{\fc}$ be as in \eqref{eq:tilde}. For each $d\geq \tilde{\fc}$, let $(\y^{+,d},\y^{-,d})$ be an optimal solution of (\ref{sdp-Toeplitz}).
Then for each $\alpha\in\Z^n$,
\begin{equation}
\label{asymptotics}
\lim_{d\to\infty}(y^{+,d}_{\alpha}-y^{-,d}_{\alpha})\,=\,\int_{\T^n}\z^\alpha\,d\mu^*,
\end{equation}
where the Borel signed measure $\mu^*$ on $\T^n$ is the unique optimal solution of (\ref{a3}) characterized in (\ref{carac}).
\end{lem}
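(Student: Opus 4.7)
The plan is to extract from $(\y^{+,d},\y^{-,d})$ an entrywise convergent subsequence, identify the two limit sequences as moment sequences of nonnegative Borel measures $\mu^+, \mu^-$ on $\T^n$, show that $\mu^+-\mu^-$ is feasible and optimal for (\ref{a3}), and invoke Theorem \ref{inter-super} to identify this difference with $\mu^*$; full-sequence convergence then follows by uniqueness of subsequential limits.

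First I would establish a uniform entrywise bound. Since $\rho_d$ is nondecreasing in $d$ and bounded above by $\rho_\fc$, one has $y^{+,d}_{0} + y^{-,d}_{0} = \rho_d \leq \rho_\fc$. The positive semidefiniteness of the Hermitian Toeplitz-like matrices $\mathbf{T}_d(\y^{\pm,d})$ combined with their Toeplitz structure forces $|y^{\pm,d}_{\alpha}| \leq y^{\pm,d}_{0} \leq \rho_\fc$ for every $\alpha \in \mathscr{A}^1_d$ (via $2\times 2$ principal minors). A Cantor diagonal extraction over $\Z^n$ then yields a subsequence $(d_k)$ and sequences $(y^+_\alpha)_{\alpha\in\Z^n}$, $(y^-_\alpha)_{\alpha\in\Z^n}$ with $y^{\pm,d_k}_{\alpha} \to y^\pm_\alpha$ for every $\alpha$.

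Second, entrywise convergence preserves positive semidefiniteness on every fixed principal submatrix, so $\mathbf{T}_N(\y^+)\succeq 0$ and $\mathbf{T}_N(\y^-)\succeq 0$ for every $N\in\N$. The multivariate trigonometric moment problem on the torus---the Toeplitz-like analogue invoked in the previous lemma via \cite[Theorem 5.2]{cedric} and \cite[p.~211]{iohvidov1982}---supplies nonnegative Borel measures $\mu^+,\mu^-$ on $\T^n$ with $y^\pm_\alpha = \int_{\T^n} \z^\alpha\, d\mu^\pm$ for all $\alpha \in \Z^n$. Passing to the limit in the affine constraint $y^{+,d_k}_{\alpha} - y^{-,d_k}_{\alpha} = \sigma_\alpha$ for $\alpha\in\mathscr{A}^1_\fc$ shows that $\mu := \mu^+ - \mu^-$ is feasible for (\ref{a3}); moreover
\[\Vert\mu\Vert_{TV} \,\leq\, \mu^+(\T^n) + \mu^-(\T^n) \,=\, \lim_{k}(y^{+,d_k}_{0} + y^{-,d_k}_{0}) \,=\, \lim_{k}\rho_{d_k} \,\leq\, \rho_\fc,\]
so $\mu$ attains the optimal value of (\ref{a3}). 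By the hypothesis on $\fc$ and the uniqueness statement in Theorem \ref{inter-super}, $\mu = \mu^*$.

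Finally, for each fixed $\alpha \in \Z^n$ the sequence $(y^{+,d}_{\alpha} - y^{-,d}_{\alpha})$ is uniformly bounded, and the argument above applied to any subsequence shows that every accumulation point equals $\int_{\T^n}\z^\alpha\, d\mu^*$; hence the entire sequence converges to that common limit, proving (\ref{asymptotics}). The principal technical hurdle is the measure-representation step: in the multivariate Toeplitz-like setting, recovering a bona fide Borel measure on $\T^n$ from a chain of PSD Toeplitz-like matrices of all sizes is not as immediate as in the classical univariate trigonometric moment problem and relies essentially on the representation result of \cite{cedric}; once that is available, the remainder is routine compactness and lower semicontinuity.
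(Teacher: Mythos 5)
Your proof is correct and follows the same route as the paper's: a uniform bound $|y^{\pm,d}_\alpha|\le \rho_{\fc}$ via the $2\times 2$ PSD minors and the monotone bound $\rho_{d,\tilde{\fc}}\le\rho_{\fc}$; extraction of an entrywise-convergent subsequence (the paper phrases this as weak-$\star$ compactness of the unit ball of $\ell_\infty$, you use Cantor diagonalization --- same thing); preservation of $\mathbf{T}_N(\y^\pm)\succeq 0$ for every $N$; representation of the limits as moment sequences of positive Borel measures on $\T^n$; feasibility, optimality, and uniqueness via Theorem~\ref{inter-super}; and finally full-sequence convergence from uniqueness of the accumulation point.

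One small point worth flagging: in the measure-representation step you cite \cite[Theorem 5.2]{cedric} (and \cite{iohvidov1982}) as if they carry the burden, and you even describe this as "the principal technical hurdle". Those references address the \emph{truncated} multivariate moment/Toeplitz problem, where a rank (flat-extension) hypothesis is needed. Here, after passing to the limit you have $\mathbf{T}_N(\y^\pm)\succeq 0$ for \emph{all} $N$, i.e.\ the full bisequence $(y^\pm_\alpha)_{\alpha\in\Z^n}$ is positive definite; recovering a positive Borel measure on $\T^n$ from this is the classical Bochner--Herglotz theorem for $\Z^n$ (no rank condition required), which is what the paper implicitly invokes. So the step you label as the hard part is in fact the easy, classical one, and the flat-extension machinery is not the right tool (nor is it needed). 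Also, you conclude only that the difference $y^{+,d}_\alpha-y^{-,d}_\alpha$ has a unique accumulation point; the paper asserts that $y^{+,d}_\alpha$ and $y^{-,d}_\alpha$ converge separately (which holds because the optimal $(\mu^+,\mu^-)$ must be the Jordan decomposition of $\mu^*$, hence unique), but your more conservative statement already yields \eqref{asymptotics}.
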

\proof
As $\rho_{d,\tilde{\fc}}\leq\rho_{d+1,\tilde{\fc}}\leq \rho_{\fc}$ for all $d\geq \tilde{\fc}$ and $y^{+,d}_{0}+y^{-,d}_{0}=\rho_{d,\tilde{\fc}}$, it follows that
$\vert y^{+,d}_{\alpha}\vert\leq \rho_{\fc}$ and $\vert y^{-,d}_{\alpha}\vert\leq \rho_{\fc}$ for all $\alpha\in\mathscr{A}^1_{d}$
and all $d\geq \tilde{\fc}$. 
By completing with zeros, one may and will consider all finite-dimensional  vectors $\y^{+,d}$ and $\y^{-,d}$ as elements of a bounded set
of $\ell_\infty$. Next, by weak-$\star$ sequential compactness of the unit ball of $\ell_\infty$, there exist
infinite vectors $\y^+,\y^-\in\ell_\infty$, and a subsequence $(d_k)_{k\in\N}$ such that :
\begin{equation}
\label{weakstar}
\lim_{k\to\infty}\,y^{+,d_k}_{\alpha}\,=\,y^+_{\alpha};\quad \lim_{k\to\infty}\,y^{-,d_k}_{\alpha}\,=\,y^-_{\alpha},\quad \forall\alpha\in\Z^n.
\end{equation}
Moreover from the above convergence we also have $\mathbf{T}_d(\y^+)\succeq0$ and $\mathbf{T}_d(\y^-)\succeq0$ for all $d$. This in turn implies that
$\y^+$ (resp. $\y^-$) is the moment sequence of a Borel measure $\mu^+$ (resp. $\mu^-$) on $\T^n$. In addition,
the convergence (\ref{weakstar}) yields
\[\sigma_\alpha\,=\,\lim_{k\to\infty}(y^{+,d_k}_{\alpha}-y^{-,d_k}_{\alpha})\,=\,\int_{\T^n}\z^\alpha\,d(\mu^+-\mu^-),\quad\forall\alpha\in\mathscr{A}^1_{\tilde{\fc}},\]
and
\[\rho_{\fc}\geq\,\lim_{k\to\infty}\rho_{d_k,\fc}\,=\,\lim_{k\to\infty} (y^{+,d_k}_{0}+y^{-,d_k}_{0})\,=\,\int_{\T^n}d(\mu^++\mu^-)\,\geq\,\Vert \mu^+-\mu^-\Vert_{TV},\]
which proves that $(\mu^+,\mu^-)$ is an optimal solution of (\ref{a33}). Therefore $\mu^*:=\mu^+-\mu^-$ is an optimal solution
of (\ref{a3}) and thus unique when $\tilde{\fc}$ satisfies the condition of Theorem \ref{inter-super}. This also implies that 
the limit $y^+_{\alpha}$ (resp. $y^-_{\alpha}$) in (\ref{weakstar}) is the same for all converging subsequences $(d_k)_{k\in\N^n}$ and therefore, 
for each $\alpha\in\Z^n$, the whole sequence $(y^{+,d}_{\alpha})_{d\in\N}$ (resp. $(y^{-,d}_{\alpha})_{d\in\N}$) converges to 
$y^+_{\alpha}$ (resp. $y^-_{\alpha}$), which yields the desired result (\ref{asymptotics}). $\Box$
\endproof
\subsection{A rigorous sparse recovery LP approach}

In this section we take advantage of an important consequence of viewing sparse interpolation as
a super-resolution problem.  Indeed when $\varphi$ is chosen as in (\ref{zedezero}) 
we {\it know} that the (unique) optimal solution $\mu^*$ of (\ref{a3}) is supported on the  {\it a priori} fixed grid 
$(\exp(2i\pi k_1/N),\ldots,\exp(2i\pi k_n/N))$, where $0\leq k_i\leq N$, $i=1,\ldots,n$. That is,
(\ref{a3}) is a {\it discrete} super-resolution problem as described in Cand\`es and Fernandez-Granda \cite{candes_towards_2014}.
Therefore solving (\ref{a3}) is also equivalent to solving the LP: 
\[\min_\x\,\{\Vert \x\Vert_1:\:\A\,\x\,=\,\b\,\}\]
where $\x\in\R^{[0,1,\ldots,N]^n}$. The matrix $\A$
has its columns indexed by $\beta\in [0,1,\ldots,N]^n$ and its rows indexed by $\alpha\in\mathscr{A}_{\fc}$, while
$\b=(b_\alpha)_{\alpha\in\mathscr{A}_{\fc}}$ is the vector 
of black-box evaluations at the points $(\varphi^\alpha)$, $\alpha\in\mathscr{A}_{\fc}$. So
\begin{equation}
\label{Ab}
\A(\alpha,\beta)\,=\,(\varphi^\beta)^\alpha\,=\,\varphi_{1}^{\beta_1\alpha_1}\cdots \varphi_{n}^{\beta_n\alpha_n};
\quad b_\alpha\,=\,g(\varphi^\alpha),\end{equation}
for all $\alpha\in\mathscr{A}_{\fc}$ and $\beta\in[0,1,\ldots,N]^n$.

\begin{prop}
\label{prop1}
Under the conditions of Theorem \ref{inter-super}, the LP $\displaystyle\rho=\min_\x\,\{\Vert \x\Vert_1:\:\A\,\x\,=\,\b\,\}$ with
$\A$ and $\b$ as in (\ref{Ab}) has a unique optimal solution which is the vector
of coefficients of the polynomial $g^*$ of Theorem \ref{inter-super}.
\end{prop}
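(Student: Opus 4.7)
The plan is to view the LP in Proposition \ref{prop1} as the restriction of the super-resolution problem \eqref{a3} to signed atomic measures supported on the finite grid $G=\{\varphi^\beta : \beta\in [0,1,\ldots,N]^n\}\subset\T^n$, and then transfer exact recovery from Theorem \ref{inter-super}. In other words, the LP and the super-resolution program are the \emph{same} optimization problem in two different languages, once we observe that the (unique) super-resolution optimum already lies in the restricted feasible set.

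First I would make the bijection precise: to a real vector $\x\in\R^{[0,1,\ldots,N]^n}$ I associate the signed atomic measure $\mu_\x := \sum_\beta x_\beta\,\delta_{\varphi^\beta}$ supported on $G$. Under this identification, $\Vert\mu_\x\Vert_{TV}=\Vert\x\Vert_1$, and for every $\alpha\in\mathscr{A}_{\fc}$,
\[
\int_{\T^n}\z^\alpha\,d\mu_\x(\z) \;=\; \sum_\beta x_\beta\,\varphi_{1}^{\beta_1\alpha_1}\cdots\varphi_{n}^{\beta_n\alpha_n} \;=\; (\A\x)_\alpha .
\]
Hence the constraint $\A\x=\b$ is exactly the moment constraint of \eqref{a3} rewritten for $\mu_\x$, since $b_\alpha=g^*(\varphi^\alpha)=\sigma_\alpha$. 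The objectives coincide as well.

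Next I would compare the optimal values. Every LP-feasible $\x$ produces a feasible $\mu_\x$ for \eqref{a3} of the same value, so $\rho\ge\rho_{\fc}$. Conversely, by Theorem \ref{inter-super}, the unique optimum of \eqref{a3} is $\mu^*=\sum_{\beta\in\Gamma}g^*_\beta\,\delta_{\varphi^\beta}$, which is supported on $G$ because the choice $N>\max_{i,\beta\in\Gamma}\beta_i$ puts $\Gamma$ inside $[0,\ldots,N-1]^n\subset[0,\ldots,N]^n$. Thus $\mu^*=\mu_{\x^*}$ for the vector $\x^*$ with $x^*_\beta=g^*_\beta$ on $\Gamma$ and zero elsewhere, giving an LP-feasible point with $\Vert\x^*\Vert_1=\Vert\mu^*\Vert_{TV}=\rho_{\fc}$. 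Hence $\rho\le\rho_{\fc}$, and both values agree.

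Finally, for uniqueness, any LP optimizer $\hat{\x}$ yields a feasible measure $\mu_{\hat{\x}}$ for \eqref{a3} with value $\rho=\rho_{\fc}$; by the uniqueness part of Theorem \ref{inter-super} we must have $\mu_{\hat{\x}}=\mu^*$. Since the points $\varphi^\beta$, $\beta\in[0,N-1]^n$, are pairwise distinct (by the choice of $N$ and \eqref{zedezero}), the weights of $\mu^*$ determine $\hat{\x}$ uniquely, so $\hat{\x}=\x^*$. The one step where care is needed, and which I expect to be the main obstacle in writing a fully rigorous proof, is handling the mildly overcomplete indexation $[0,1,\ldots,N]^n$ (whose endpoints $k_i=0$ and $k_i=N$ parametrize the same grid point): the strict inequality $N>\max_{i,\beta\in\Gamma}\beta_i$ ensures all active indices of $\x^*$ lie in the unambiguous range, so uniqueness of $\hat{\x}$ is preserved up to this harmless bookkeeping.
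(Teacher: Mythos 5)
Your proposal follows essentially the same route as the paper's proof: identify each LP-feasible vector $\x$ with the signed atomic measure $\mu_\x=\sum_\beta x_\beta\,\delta_{\varphi^\beta}$ (the paper writes $\nu=\nu^+-\nu^-$), note that $\Vert\mu_\x\Vert_{TV}=\Vert\x\Vert_1$ and that the constraint $\A\x=\b$ is exactly the moment constraint of \eqref{a3}, obtain $\rho\ge\Vert\mu^*\Vert_{TV}$ from feasibility of $\mu_\x$, obtain $\rho\le\Vert\mu^*\Vert_{TV}$ by exhibiting $\g^*$ as a feasible LP point realizing $\mu^*$, and then pull uniqueness back from Theorem~\ref{inter-super}. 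The one thing you add, and which the paper glosses over when it writes ``Uniqueness follows from the uniqueness of solution to \eqref{a3},'' is the observation that the index map $\beta\mapsto\varphi^\beta$ on $[0,\ldots,N]^n$ is not injective at the boundary $\beta_i\in\{0,N\}$; this is a genuine wrinkle, since if $\Gamma$ contains an exponent with some $\beta_i=0$, the corresponding weight can be split between $\beta$ and its alias with $\beta_i=N$ (same sign) without changing $\Vert\x\Vert_1$, so strictly speaking uniqueness of the LP minimizer requires either restricting the index set to $[0,\ldots,N-1]^n$ or interpreting uniqueness modulo this collapse. Your flag is correct and your proof is otherwise complete.
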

\proof
Let
$g\in\R[\x]$, $\z\mapsto g(\z):=\sum_\beta g_\beta\, \z^\beta$, be the polynomial with vector of coefficients 
\[\mathbf{g}=[g_\beta]_{\beta\in [0,\ldots,N]^n}.\]
Then by construction, $\g\in\R^{[0,\ldots,N]^n}$ is an admissible solution of the LP 
with $\A$ and $\b$ as in (\ref{Ab}). 
One has $g(\varphi^\alpha)=\sigma_\alpha=g^*(\varphi^\alpha)$ for all $\alpha\in\mathscr{A}_{\fc}$,
where $g^*$ is as in Theorem \ref{inter-super}. The Borel measures $\nu^+$ and $\nu^-$ on $\T^n$ defined by
\[\nu^+:=\sum_{\beta\in [0,\ldots,N]^n} \max[0,x_\beta]\,\delta_{\varphi^\beta};\quad
\nu^-:=\sum_{\beta\in [0,\ldots,N]^n} -\min[0,x_\beta]\,\delta_{\varphi^\beta},\]
are a feasible solution of (\ref{a33}) and the Borel signed measure $\nu:=\nu^+-\nu^-$
satisfies $\Vert\nu\Vert_{TV}=\Vert \nu^+\Vert+\Vert \nu^-\Vert$. Hence $\Vert\nu\Vert_{TV}\geq\Vert\mu^*\Vert_{TV}$
where $\mu^*$ is the optimal solution of (\ref{a3}).
So the optimal value $\rho$ of the LP satisfies $\rho\geq\Vert\mu^*\Vert_{TV}$. On the other hand
with $g^*$ as in Theorem \ref{inter-super}, let
\[\g^*=[g^{*}_\beta]_{\beta\in [0,\ldots,N]^n}.\]
Then $\Vert\g^*\Vert_1=\Vert\mu^*\Vert_{TV}\leq\rho$ and so $\Vert\g^*\Vert_1=\rho$, which proves that
$\g^*$ is an optimal solution of the LP. Uniqueness follows from the uniqueness of solution to
(\ref{a3}). $\qed$
\endproof

\section{Numerical experiments}\label{sec:experiments}

In the problem of polynomial interpolation, we are not given a number of evaluations to begin with, i.e. $\fc$. Rather, we seek to recover a blackbox polynomial using the least number of evaluations. Thus, one could set $\fc = 1$, then compute a hierarchy of SDPs of order $d=1,2,\hdots$. Next, set $\fc = 2$, and compute another hierarchy of order $d=2,3,\hdots$. This leads to a hierarchy of hierarchies, which is costly from a computational perspective. Thus, we propose a single hierarchy where we choose to make all possible evaluations at each relaxation order. Therefore we have fixed $d=\fc$ in (\ref{sdp-Toeplitz}) and let $\fc$ increase 
to see when we recover the desired optimal measure (polynomial $g^*$) of Theorem \ref{inter-super}.\footnote{In the univariate case, the optimal value of $(\textbf{P}_{d,\fc})$ in \eqref{sdp-Toeplitz} does not increase with $d$ when $d > \fc$. Indeed, for any optimal solution of $(\textbf{P}_{\fc,\fc})$, there exists a representing signed measure $\mu = \mu^+-\mu^-$ on the torus. However, one may not be able to extract this measure.}

In order to make a rigorous comparison with Prony's method, we use the same exact same procedure to extract the atomic measures from the output matrices of the semidefinite optimization as for Prony's method. For the super-resolution of order $d$, we use Prony with input measurements up to degree $2d$ (that way $d_1=d_2=d$ in Section \ref{subsub:Hankel Prony}) for each of the two Toeplitz matrices. In all numerical experiments, we use the threshold $\epsilon = 0.1$ for determining the rank of a matrix in its SVD decomposition. This threshold is also used to test the rank conditions \eqref{rank+}-\eqref{rank-}.

\subsection{Separation of the support}
Initially, super resolution was concerned with signal processing where the measurements are given and fixed and we have no influence on them. In constrast, in the super resolution formulation of an polynomial interpolation problem, we can choose where we make the measurements, that is the points where we want to evaluate the blackbox polynomial. This can have a strong influence on the seperation condition which guarantees exact recovery on the signal (our blackbox polynomial). We illustrate this phenomenon on the following example.
Suppose that we are looking for the blackbox polynomial
\begin{equation}
g(x) = 3 x^{20} + x^{75} - 6 x^{80}
\end{equation}
whose degree we assume to be less than or equal to 100. We consider such a high degree in order to well illustrate the notion of the separation of the support. Below, we will consider more realistic polynomials, limited to degree 10. We now investigate two different ways of making evaluations and their impact on the separation of the support, crucial for super-resolution.
Let us firstly evaluate the blackbox polynomial at the points 
\begin{equation}
(e^{2\pi i \over 101})^0, (e^{2\pi i \over 101})^1, (e^{2\pi i \over 101})^2, \hdots , (e^{2\pi i \over 101})^d \subset \mathbb{T}
\end{equation}
at step $d$ of the SDP hierarchy (i.e. $(\textbf{P}_{d,d})$ in \eqref{sdp-Toeplitz}).
The proximity of points on the torus is thus directly related to the proximity of the exponents of the polynomial. It can be seen in the left part of Figure \ref{fig:single} that some of the point on the torus are very close to one another. 

Let us secondly evaluate in the blackbox polynomial at the points 
\begin{equation}
(e^i)^0, (e^i)^1, (e^i)^2, \hdots , (e^i)^d \subset \mathbb{T}
\end{equation}
at step $d$ of the SDP hierarchy.
The proximity of points on the torus is thus no longer related to the proximity of the exponents of the polynomial.
It can be seen in the left part of Figure \ref{fig:multiple} that the
points on the torus are nicely spread out. This is not guaranteed to
be the case, but is expected to be true \BM{for small values of $d$
  (since $\mathrm{frac}(\frac{k}{2\,\pi})$ are well separated in
  $[0,1]$ for small $k\in \N$)}. In order to recover the blackbox polynomial once a candidate atomic measure is computed, we form a table of the integers $k=1,\hdots,d$ modulo $2\pi$. For each atom, we consider its argument and find the closest value in the table, yielding an integer $k$, i.e. the power of the monomial associated to the atom. The coefficient of the monomial is given by the weight of the atom.

We now provide numerical experiments. Table 1 and Table 2 show the optimal value and the number of atoms of the optimal measure $\mu$ at each order $d$. 
Graphical illustrations of the solutions appear in Figure \ref{fig:single} and Figure \ref{fig:multiple}. The dual polynomials in the right hand of the figures illustrate why a higher degree is needed when the points are closer.

\begin{table}[!htb]
    \begin{minipage}{.5\linewidth}
      \caption{Evaluation at roots of unity $e^{2 k \mathbf{i} \pi \over 101}$}
      \centering
 \begin{tabular}{|c|c|c|c|c|}
\hline
Order $d=\fc$ & $\|\mu\|_{TV}$ & $\# \text{supp}(\mu)$ \\
\hline
0 &  \hphantom{0}2.0000 & 1 \\
1 &  \hphantom{0}7.6618 & 2 \\
2 &  \hphantom{0}8.1253 & 3\\
3 & \hphantom{0}8.3655 & 5\\
4 &  \hphantom{0}8.7240 & 7\\
5 &  \hphantom{0}8.9882 & 9\\
6 &  \hphantom{0}9.3433 & 11\\
7 &  \hphantom{0}9.5837 & 13\\
8 &  \hphantom{0}9.7993 & 17\\
9 &  \hphantom{0}9.9436 & 19\\
10 &  \hphantom{0}9.9978 & 20\\
11& 10.0000 & 3\\
\hline
\end{tabular}
    \end{minipage}%
    \begin{minipage}{.5\linewidth}
      \centering
        \caption{Evaluation at $e^{k \mathbf{i} }$}
\begin{tabular}{|c|c|c|c|}
\hline
Order $d=\fc$ & $\|\mu\|_{TV}$ & $\# \text{supp}(\mu)$ \\
\hline
0 & \hphantom{0}2.0000 & 1 \\
1 & \hphantom{0}8.7759 & 2 \\
2 & \hphantom{0}9.2803 & 3 \\
3 & 10.0000 & 3 \\
\hline
\end{tabular}
    \end{minipage} 
\label{tab:svsm}
\end{table}
\begin{remark}
Before we move on to other examples, we note that naive LP with evaluations at random points on the real line requires about 50 evaluations on this example, compared with the 4 evaluations with super-resolution using multiple loops and in fact, the rigorous LP on the torus also requires 4 evaluations.
\end{remark}

\begin{figure}[!h]
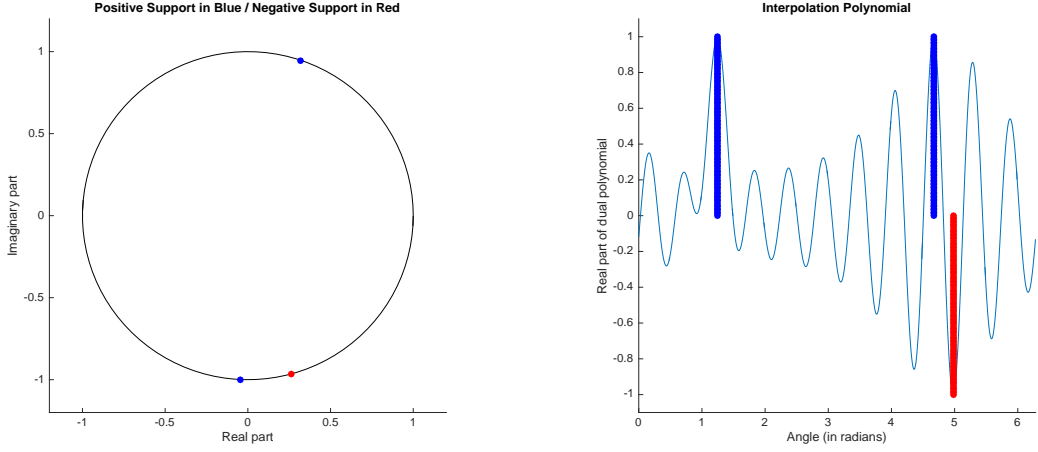

\centering
\begin{minipage}{.5\textwidth}
  \centering
  \includegraphics[width=1.1\textwidth]{support_single}
  \label{fig:test1.1}
\end{minipage}%
\begin{minipage}{.5\textwidth}
  \centering
  \includegraphics[width=1.1\textwidth]{inter_single}
  \label{fig:test1.2}
\end{minipage}
\caption{Primal-dual solution of super-resolution at order 11 (using single loop)}
\label{fig:single}
\end{figure}

\begin{figure}[!h]
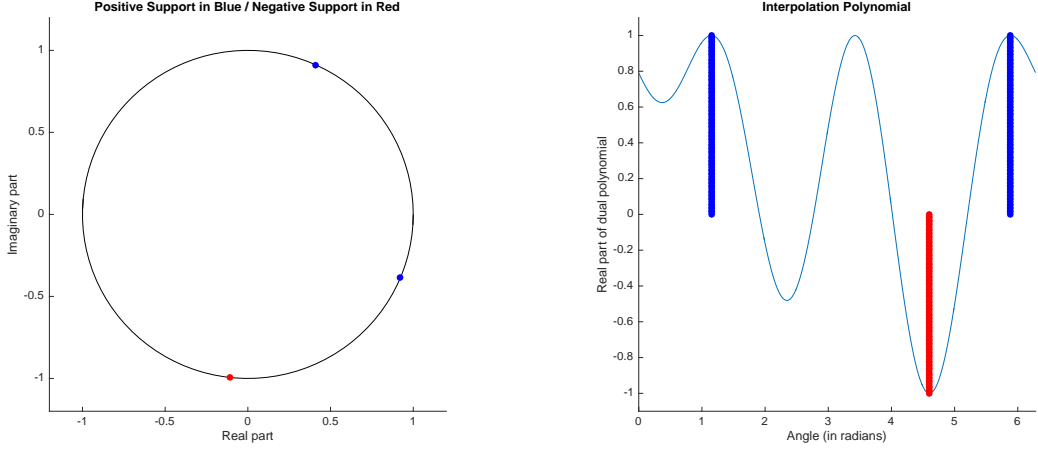

\centering
\begin{minipage}{.5\textwidth}
  \centering
  \includegraphics[width=1.1\textwidth]{support_multiple}
  \label{fig:test2.1}
\end{minipage}%
\begin{minipage}{.5\textwidth}
  \centering
  \includegraphics[width=1.1\textwidth]{inter_multiple}
  \label{fig:test2.2}
\end{minipage}
\caption{Primal-dual solution of super-resolution at order 3 (using multiple loops)}
\label{fig:multiple}
\end{figure}

\subsection{Methodology for comparison}
\label{subsec:Methodology for comparison}
Our methodology for comparing the various approaches is as follows. 
\begin{enumerate}
\item \textbf{Generation of the examples:} We define a random set of ten sparse polynomials with up to ten variables and up to degree ten (first column of Table \ref{tab:music}). We believe that polynomials of higher degree are not realistic and are rarely used in numerical computations. For example, for a polynomial of $n$ variables and $k$ atoms, we generate the exponents $\beta$ of the $k$ monomials $\x^{\beta}$ randomly from $\mathbb{N}^n_d := \{\beta \in \mathbb{N}^n ~|~ \sum_{i=1}^n \beta_i \leqslant d \}$ and the associated non-zero coefficients $g_{\beta}$ are drawns from a uniform distribution in the interval $[-10,10]$.\\
\item \textbf{Results in the noiseless case:} We detect the minimum number of evaluations for each approach to recover the blackbox polynomial in the noiseless case and report the results in Table \ref{tab:music}.
We use evaluations at  the points $e^{\alpha \im}= (e^{\im
  \alpha_{1}},\ldots,e^{\im \alpha_{n}})$ with
$\alpha=(\alpha_{1},\ldots,\alpha_{n}) \in \Z^{n}$ up to a certain
degree $\sum_{k=1}^n |\alpha_k| \leqslant d$. The corresponding number of evaluations and degree $(d)$ are reported in the columns {\em Rigorous LP}, {\em super-resolution}, and {\em Toeplitz Prony} of Table \ref{tab:music}. In {\em Advanced T. Prony}, evaluations are made at different points. Thus, only the first three columns of Table \ref{tab:music} can be compared in presence of noise.\\
\item \textbf{Results in the presence of noise:} For each of the ten polynomials in the list of examples, we determine the maximum degree $d^\text{max}$ for the evaluations $g(e^{i\alpha_1},\hdots,e^{i\alpha_n})$ with $\alpha_1, \hdots , \alpha_n \in \mathbb{Z}$ and $\sum_{i=1}^n |\alpha_k| \leq d^\text{max}$, among {\em Rigorous LP}, {\em super-resolution}, and {\em Toeplitz Prony} in Table \ref{tab:music}. For example, for the first line of Table \ref{tab:music}, that number is $d^\text{max} := 2$ which corresponds to $3$ evaluations in this univariate problem.
As a result, we know that for these evaluations all three approaches return the correct sparse polynomial. We then add uniform noise to those evaluations, i.e.
\begin{equation}
g(e^{\im \alpha_1},\hdots,e^{\im \alpha_n}) + \epsilon_\alpha~, ~~~ \epsilon_\alpha \in \mathbb{C}, ~~~  \Re\epsilon_\alpha,~ \Im \epsilon_\alpha \in [-0.1,+0.1]
\end{equation}
for all $|\alpha_1| + \hdots + |\alpha_n| \leqslant d^\text{max}$ and $\alpha_1, \hdots , \alpha_n \in \mathbb{Z}$.
Next, we run each approach ten times (with new noise every time) and report the average error in Table \ref{tab:noise}. The error is defined as the relative error in percentage of the output polynomial $\hat{g}(\x) = \sum_{\alpha} \hat{g}_\alpha \x^\alpha$ compared with the blackbox polynomial $g(\x) = \sum_{\alpha} g_\alpha \x^\alpha$ using the 2 norm of the coefficients, i.e.
\begin{equation}
100 \times \frac{\sqrt{\sum_{\alpha} (\hat{g}_\alpha - g_\alpha)^2}}{\sqrt{\sum_\alpha g_\alpha^2}}.
\end{equation}
Note that in \textit{Rigorous LP} and \textit{super-resolution} the equalities associated to the evaluations are relaxed to inequalities, a functionality which is not possible in \textit{Toeplitz Prony}. This allows for more robutness. Precisely, in \textit{Rigorous LP}, we replace $\A\x - \b = 0$ by $-0.1 \leqslant \Re (\A\x - \b) \leqslant 0.1$ and $-0.1 \leqslant \Im (\A\x - \b) \leqslant 0.1$, while in \textit{super-resolution} we use a 2-norm ball of radius $0.1 \times \sqrt{2}$ (similar to the technique employed in \cite{candes_towards_2014}).  
\end{enumerate}

\begin{table}[h!]̧
\centering
\begin{tabular}{|c|c|c|c||c|}
\hline
Blackbox & Rigorous & Super & Toeplitz & Advanced\\
Polynomial & LP & Resolution & Prony & T. Prony \\
\hline
$-1.2x^4 + 6.7x^7$ & \hphantom{11}2 (1) & \hphantom{111}3 (2) & \hphantom{111}3 (2) & \hphantom{1}3 \\
\hline
$2.3x^6 + 5.6x^3 -1.5x^2$ & \hphantom{11}4 (3) & \hphantom{111}5 (4) & \hphantom{111}4 (3) & \hphantom{1}4 \\
\hline
$-2.1x^3 + 5.4x^2 -2.0x + 6.2x^5  - 5.2$ & \hphantom{11}5 (4) & \hphantom{111}6 (5) & \hphantom{111}6 (5) & \hphantom{1}6 \\
\hline
$0.8x_1x_2 - x_1x_2^2$ & \hphantom{1}19 (3) & \hphantom{11}31 (4) & \hphantom{11}10 (2) & \hphantom{1}6 \\
\hline
$-5.8x_1^2x_2^2  -8.2x_1^2x_2^3 +5.5x_1^3x_2 + 1.1$ & \hphantom{1}10 (2) & \hphantom{11}19 (3) & \hphantom{11}19 (3) & 13 \\
\hline
$-7.2x_1x_2^2 +1.8x_1^3x_2^2 + 2.6x_1^4x_2^5 + 6.2x_1x_2^5 + 2.5x_1$ & \hphantom{1}10 (2) & \hphantom{11}19 (3) & \hphantom{11}19 (3) & 14 \\
\hline
$-3.5 + 8.1x_1^3x_2x_3$ & \hphantom{11}7 (1) & \hphantom{11}28 (2) & \hphantom{11}28 (2) & \hphantom{1}9 \\
\hline
$-1.2x_1^2x_2^2x_3^3 + 7.3x_1^2x_2 - 2.4x_2$ & \hphantom{1}28 (2) & \hphantom{11}28 (2) & \hphantom{11}28 (2) & 16 \\
\hline
$-6.1x_1^2x_5 +2.5x_2x_4 + 4.8x_3$ & 136 (2) & \hphantom{1}136 (2) & \hphantom{1}136 (2) & 30 \\
\hline
$2.9x_2x_3x_9^4x_{10} - 5.6x_1x_4^2x_7 - 4.1x_3x_5x_6^3x_8$ & N. A. &  1595 (2) & 1595 (2) & 65 \\
\hline
\end{tabular}
\caption{Minimum number of evaluations and degrees without noise (evaluations in the points $(e^{\im\alpha_1}, \hdots , e^{\im \alpha_n})$ for $|\alpha_1| + \hdots + |\alpha_n| \leqslant d$ and $\alpha_1, \hdots , \alpha_n \in \mathbb{Z}$ for the first three columns)}
\label{tab:music}
 \end{table}

 \begin{table}[h!]
 \centering
 \begin{tabular}{|c|c|c|c|}
 \hline
 Blackbox & Rigorous & Super & Toeplitz\\
 Polynomial & LP & Resolution & Prony \\
 \hline
 $-1.2x^4 + 6.7x^7$ & 4.18\% & 1.58\% & 0.61\% \\
 \hline
 $2.3x^6 + 5.6x^3 -1.5x^2$ & 1.94\% & 1.81\% & 0.85\% \\
 \hline
 $-2.1x^3 + 5.4x^2 -2.0x + 6.2x^5  - 5.2$ & 1.47\%  & 1.40\% & 0.69\% \\
 \hline
 $0.8x_1x_2 - x_1x_2^2$ & 3.23\% & 4.84\% & 2.26\% \\
 \hline
 $-5.8x_1^2x_2^2  -8.2x_1^2x_2^3 +5.5x_1^3x_2 + 1.1$ & 1.13\%  & 0.87\% &  1.29\% \\
 \hline
 $-7.2x_1x_2^2 +1.8x_1^3x_2^2 + 2.6x_1^4x_2^5 + 6.2x_1x_2^5 + 2.5x_1$ & 1.23\% &  1.08\% & 6.28\% \\
 \hline
 $-3.5 + 8.1x_1^3x_2x_3$ & 0.79\% & 0.70\% & 0.50\% \\
 \hline
 $-1.2x_1^2x_2^2x_3^3 + 7.3x_1^2x_2 - 2.4x_2$ & 2.19\% & 1.03\% & 1.39\% \\
 \hline
 $-6.1x_1^2x_5 +2.5x_2x_4 + 4.8x_3$ & 0.94\% & 1.15\% & 1.04\% \\
 \hline  
 $2.9x_2x_3x_9^4x_{10} - 5.6x_1x_4^2x_7 - 4.1x_3x_5x_6^3x_8$ & N. A. & 0.47\% & 0.46\% \\
 \hline    
 \end{tabular}
 \caption{Relative error in percentage with uniform noise between $-0.1$ and $0.1$ for the real and imaginary parts on the measurements (evaluations in the points $(e^{i\alpha_1}, \hdots , e^{i \alpha_n})$ for $|\alpha_1| + \hdots + |\alpha_n| \leqslant d$ and $\alpha_1, \hdots , \alpha_n \in \mathbb{Z}$).}
 \label{tab:noise}
 \end{table}

\textbf{The \textit{Advanced T. Prony} column of Table \ref{tab:music}:} the first $r$ exponents $\alpha\in \N^{n}$ are chosen (where $r$ is the number of monomials in the blackbox polynomias) for the monomials indexing the rows and the first $r$ exponents $-\alpha$ with $\alpha\in \N^{n}$ are chosen for indexing the columns of the Toeplitz matrix.  Since $g(e^{-\im\alpha})= \overline{g(e^{\im\alpha})}$, the number of evaluations does not include the conjugate of known values of $g$. The number of monomials $r$ is unknown but one could use {\em Advanced T. Prony} with $r=1,2,\hdots$ successively. We only report the result when setting $r$ to the number of monomials in the blackbox polynomial. Note that in the other approaches in Table \ref{tab:music}, we do \textit{not} assume that the number of monomials is known. Same goes in the presence of noise.

\subsection{Discussion}

Disclaimer: In the sequel, we discuss various advantages and drawbacks of the three methods and of course the resulting conclusions should be interpreted with care as they are biased by the examples that we have considered.

$\bullet$ In the noiseless case, Rigorous LP generally requires the
least number of evaluations compared with super-resolution and
Toeplitz Prony as can be seen in Table \ref{tab:music}. This is a
remarkable situation where the sparse recovery approach (Rigorous LP) is guaranteed to recover the polynomial even if the RIP property does not hold. The classical result on Prony's method is that the number of evaluations to recover the blackbox polynomial is equal to twice the number of monomials in the blackbox polynomial (in the univariate case). Toeplitz Prony goes further: the number of evaluations is equal to the number of monomials plus one (in the univariate case, as explained in Section \ref{subsub:Toeplitz Prony}). For example, the third example in Table \ref{tab:music} requires 6 evaluations and is composed of 5 monomials.

$\bullet$ In terms of certification, in principle, super-resolution has to be applied with enough points ($\geqslant 128$ for $n=1$, $\geqslant 512$ for $n=2$ and more if the separation between the points is small \cite{candes_towards_2014}) to guarantee the existence of a dual certificate polynomial. Moreover, in the multivariate case,  no bound on the order of the SDP relaxation is known to guarantee that the flat extension property is satisfied (rank conditions \eqref{rank+}-\eqref{rank-})\footnote{In a few cases where \eqref{rank+}-\eqref{rank-} are not satisfied, we are still able to a recover polynomial using the algorithm in Section \ref{subsub:Hankel Prony}.}. In contrast, Toeplitz Prony requires evaluations at points $\alpha\in \Z^{n}$ with $|\alpha|\leq r$ where $r$ is at most the number of monomials, in order to recover the decomposition of the sparse polynomial.
  In practice, the experimentations show that a small number of evaluations is sufficient to compute the decomposition in both methods.

$\bullet$ In terms of computational burden, among Rigorous LP, super-resolution, and Toeplitz Prony, the cheapest approach is Toeplitz Prony since it requires only two linear algbebra operations on matrices of size dependent on the number of monomials in the blackbox polynomials. super-resolution entails a heavy computational burden with the semidefinite optimization. Rigorous LP requires the longest setup time because a variable has to be created for each potential monomial in the blackbox polynomial, unlike the two other approaches. In particular, the setup time is too long on a standard laptop for the example with 10 variables (hence N. A. in Table \ref{tab:music} and Table \ref{tab:noise}). Howevever, after the setup step has been performed, computing the LP is fast and reliable.

$\bullet$ Concerning noise, it seems that the three methods perform more and less equally well even with the relatively large noise level that we have selected, namely 0.1 error on the evaluations. This is little bit surprising for the Prony method because it seems to be commonly admitted that Prony is not very robust to noise. This surprising relative robustness may be due to the large threshold $\epsilon = 0.1$ allowed in the rank determination of the SVD decomposition. Indeed, if we select a smaller threshold, we observe degradation of the results for Prony (and super resolution which relies on Prony for the extraction step after the optimization step). See table \ref{tab:thres} below.

\begin{table}[h!]
\centering
\begin{tabular}{|c|c|c|c|c|c|c|c|c|}
\hline
Blackbox & \multicolumn{4}{|c|}{Super Resolution} & \multicolumn{4}{c|}{Toeplitz Prony}\\
\cline{2-9}
Polynomial & $10^{-1}$ & $10^{-2}$ & $10^{-3}$ & $10^{-4}$ & $10^{-1}$ & $10^{-2}$ & $10^{-3}$ & $10^{-4}$ \\
\hline
$-1.2x^4 + 6.7x^7$ &   1.58\% &    \hphantom{1}1.58\% &    \hphantom{11}1.58\% &    \hphantom{11}1.58\% &   0.79\% &    \hphantom{11}0.79\% &   \hphantom{11}0.79\% &   \hphantom{11}0.79\% \\
\hline
 $2.3x^6 + 5.6x^3 -1.5x^2$ &   1.80\% &    \hphantom{1}1.80\% &    \hphantom{11}1.80\% &    \hphantom{11}1.80\% &    0.91\% &    \hphantom{11}1.15\% &    \hphantom{11}1.15\% &    \hphantom{11}1.15\% \\
\hline
$-2.1x^3 + 5.4x^2 -2.0x + 6.2x^5  - 5.2$ &   1.42\% &    \hphantom{1}1.44\% &   \hphantom{11}1.44\% &    \hphantom{11}1.44\% &    0.68\% &    \hphantom{11}0.68\% &    \hphantom{11}0.68\% &  \hphantom{11}0.68\% \\
\hline
 $0.8x_1x_2 - x_1x_2^2$ &    4.69\% &   17.25\% &   \hphantom{1}12.49\% &   \hphantom{1}25.50\% &    5.35\% &  137.90\% &  165.79\% &  226.29\% \\
\hline 
 $-5.8x_1^2x_2^2  -8.2x_1^2x_2^3 +5.5x_1^3x_2 + 1.1$ &    0.93\% &    \hphantom{1}2.10\% &   \hphantom{1}42.69\% &   \hphantom{1}47.50\% &    1.22\% &   \hphantom{1}60.36\% &   \hphantom{1}53.86\% &   \hphantom{1}42.47\% \\
\hline
$-7.2x_1x_2^2 +1.8x_1^3x_2^2 + 2.6x_1^4x_2^5 + \hdots$ &    1.00\% &   17.56\% &   \hphantom{1}43.39\% &   \hphantom{1}67.55\% &    8.45\% &   \hphantom{1}89.45\% &   \hphantom{1}45.82\% &   \hphantom{1}33.87\% \\
\hline
 $-3.5 + 8.1x_1^3x_2x_3$ &    0.78\% &    \hphantom{1}0.78\% &   \hphantom{1}72.17\% &   \hphantom{1}62.02\% &    0.41\% &   \hphantom{1}96.50\% &   \hphantom{1}77.42\% &   \hphantom{1}83.27\% \\
\hline
$-1.2x_1^2x_2^2x_3^3 + 7.3x_1^2x_2 - 2.4x_2$ &  1.09\% &   18.55\% &  \hphantom{1}86.55\% &   \hphantom{1}83.45\% & 3.59\% &   \hphantom{1}66.02\% &   \hphantom{1}39.77\% &   \hphantom{1}55.15\% \\
\hline
 $-6.1x_1^2x_5 +2.5x_2x_4 + 4.8x_3$ &    0.85\% &   \hphantom{1}24.60\% &  105.84\% &  140.05\% &    1.68\% &  130.39\% &   \hphantom{1}86.77\% &   \hphantom{1}79.96\% \\
\hline
 $2.9x_2x_3x_9^4x_{10} - 5.6x_1x_4^2x_7 + \hdots $ &   0.54\%  &   \hphantom{1}0.54\% &  136.20\%  & 146.93\%  &   6.45\% &  251.17\% &  119.65\% &  257.64\% \\
\hline
\end{tabular}
\caption{Same experiments as in Table \ref{tab:noise} but with four different values of the rank threshold $\epsilon = 10^{-1},10^{-2},10^{-3},10^{-4}$.}
\label{tab:thres}
\end{table}

\section{Another efficient (\textit{a priori} heuristic) approach}

In sequel we propose still use the super-resolution hierarchy \eqref{sdp-Toeplitz} but now by restricting the evaluations at points $\varphi^{\alpha}$ with

\begin{equation}
\alpha \in\,\mathscr{A}^2_{\fc}\,:=\,\{\alpha \in \mathbb{N}^n ~|~
\|\alpha\|_1 \leqslant \fc \} \subset \mathscr{A}^1_{\fc} \subset
\mathscr{A}_{\fc}.
\end{equation}

This is illustrated in Figure \ref{fig:eval2}. This restriction is first inspired by the fact that Hankel Prony descibed in Section \ref{subsub:Hankel Prony} is guaranteed to work using only those evaluations. There is another more general inspiration coming from two mathematical results.

\begin{figure}[!h]
	\centering
	\includegraphics[width=.6\textwidth]{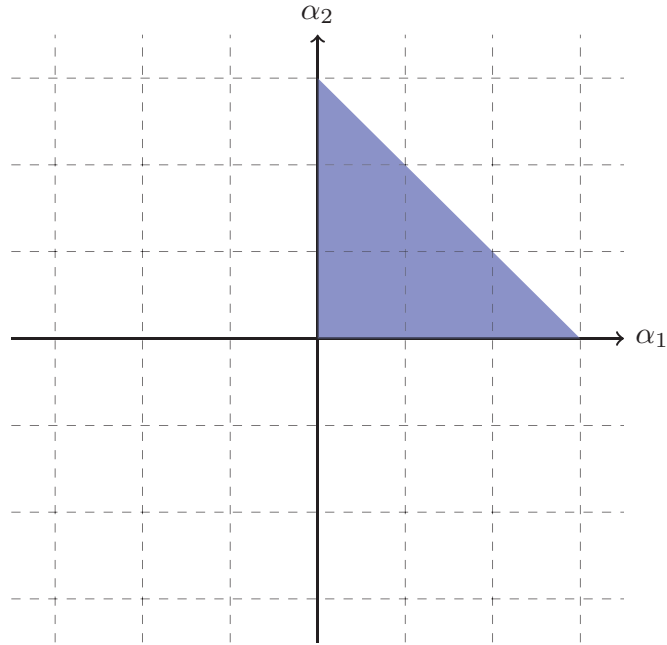}
	\caption{Evaluations at $\alpha$ with $\alpha_1+\alpha_2 \leqslant 3$ and $\alpha_1,\alpha_2 \in \mathbb{N}$.}
\label{fig:eval2}
\end{figure}

We provide a result valid in full generality for atomic measures (with finitely many atoms) which indeed suggests
that in practice it may suffice to make evaluations at $\alpha \in \mathbb{N}$ (instead of $\alpha \in \mathbb{Z}$). The resulting semidefinite programs have Toeplitz matrices of the same dimension but include much less linear moment constraints. With 10 variables, the first order semidefinite program of the hierarchy entails matrix variables of size $11 \times 11$ and \textit{only} $11$ linear equalities (instead of 56)! The second order relaxation entails matrix variables of size $66 \times 66$ and $66$ linear equalities (instead of 1,596), and the third relaxation entails matrix variables of size $286 \times 286$ and $286$ linear equalities (instead of 21,691), and so on.
We first remind the reader of a well-known result.

\begin{prop}[Consequence of Stone-Weiestrass]
\label{prop-Stones-W}
Let $(y_{\alpha,\beta})_{\alpha,\beta \in \mathbb{N}^n}$ denote a multi-indexed sequence of complex numbers and let $K \subset \mathbb{C}^n$ denote a compact set. If there exists a complex-valued finite Borel measure $\mu$ supported on $K$ such that 
\begin{equation}
\label{eq:mm}
y_{\alpha,\beta} = \int_{K} z^\alpha \bar{z}^\beta d\mu ~ , ~~~ \forall \alpha , \beta \in \mathbb{N}^n,
\end{equation}
then $\mu$ is the unique complex-valued finite Borel measure to satisfy \eqref{eq:mm}.
\end{prop}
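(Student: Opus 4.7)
The plan is a textbook application of the complex Stone--Weierstrass theorem together with uniqueness in the Riesz representation theorem. First I would introduce the algebra
\[
\mathcal{A} \,:=\, \mathrm{span}_{\mathbb{C}}\{\,z^{\alpha}\bar{z}^{\beta}\,:\,\alpha,\beta\in\mathbb{N}^{n}\,\}\,\subset\, C(K;\mathbb{C}),
\]
and verify the three hypotheses of the complex Stone--Weierstrass theorem: (a) $\mathcal{A}$ is a subalgebra, since $(z^{\alpha}\bar{z}^{\beta})(z^{\alpha'}\bar{z}^{\beta'}) = z^{\alpha+\alpha'}\bar{z}^{\beta+\beta'}$; (b) it contains the constant function $1$ (take $\alpha=\beta=0$); (c) it separates points of $K$, because the coordinates $z_{1},\ldots,z_{n}$ already do; and (d) it is closed under complex conjugation, since $\overline{z^{\alpha}\bar{z}^{\beta}} = z^{\beta}\bar{z}^{\alpha}\in\mathcal{A}$. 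Hence $\mathcal{A}$ is dense in $(C(K;\mathbb{C}),\|\cdot\|_{\infty})$ because $K$ is compact Hausdorff.

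Next, suppose $\mu_{1},\mu_{2}$ are two complex-valued finite Borel measures on $K$ satisfying \eqref{eq:mm}, and set $\nu:=\mu_{1}-\mu_{2}$. By linearity, $\int_{K} z^{\alpha}\bar{z}^{\beta}\,d\nu = 0$ for every $\alpha,\beta\in\mathbb{N}^{n}$, hence $\int_{K} p\,d\nu = 0$ for every $p\in\mathcal{A}$. Given any $f\in C(K;\mathbb{C})$ and $\varepsilon>0$, pick $p\in\mathcal{A}$ with $\|f-p\|_{\infty}<\varepsilon$; then
\[
\Bigl|\int_{K} f\,d\nu\Bigr| \,=\, \Bigl|\int_{K} (f-p)\,d\nu\Bigr| \,\leq\, \|f-p\|_{\infty}\,\|\nu\|_{TV} \,\leq\, \varepsilon\,\|\nu\|_{TV},
\]
so $\int_{K} f\,d\nu = 0$ for every $f\in C(K;\mathbb{C})$.

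Finally I would invoke the uniqueness part of the Riesz representation theorem on the compact Hausdorff space $K$: a complex regular Borel measure is determined by the continuous linear functional it induces on $C(K;\mathbb{C})$; since that functional vanishes, $\nu=0$ and therefore $\mu_{1}=\mu_{2}$. The only mildly delicate point to flag is that one must know finite complex Borel measures on a compact metrizable (or at least Hausdorff) subset of $\mathbb{C}^{n}$ are automatically regular, which is standard. There is no real obstacle here; the proposition is essentially a dictionary entry bridging the full complex moment problem on compact sets to Stone--Weierstrass density.
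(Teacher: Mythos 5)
Your proof is correct and follows essentially the same route as the paper's: apply the complex Stone--Weierstrass theorem to deduce that $\int_K f\,d(\mu_1-\mu_2)=0$ for all continuous $f$, then conclude $\mu_1=\mu_2$. Your write-up is actually more careful than the paper's, which simply cites Stone--Weierstrass without verifying the algebra/separation/conjugation hypotheses or naming the Riesz-representation uniqueness step, both of which you spell out.
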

\begin{proof}
Consider another such measure $\hat{\mu}$. Then
\begin{equation}
\int_{K} z^\alpha \bar{z}^\beta d(\mu-\hat{\mu}) = 0~ , ~~~ \forall \alpha , \beta \in \mathbb{N}^n.
\end{equation}
Thanks to the complex Stone-Weiestrass Theorem,
\begin{equation}
\int_{\mathbb{C}^n} \varphi d(\mu-\hat{\mu}) = 0~ , ~~~ \forall \alpha , \beta \in \mathbb{N}^n.
\end{equation}
for all function $\varphi : \mathbb{C}^n \longrightarrow \mathbb{C}$ continuous with respect to the sup-norm $\|\varphi\|_{\infty} := \sup_{z \in K} |\varphi(z)|$. Therefore $\mu=\hat{\mu}$.
\end{proof}
In practice, whether it be interpolation or optimization, we are generally interested in {\it atomic} measures with finitely many atoms (in short, atomic measures in the sequel). 
The next result establishes that for such atomic measures we do not have to care about conjugates, which in view of
Proposition \ref{prop-Stones-W}, we find somewhat counter-intuitive.
\begin{lemma}
\label{lemma:atom}
Let $(y_{\alpha})_{\alpha\in \mathbb{N}^n}$ denote a multi-indexed sequence of complex numbers. If there exists an atomic complex-valued measure $\mu$ such that 
\begin{equation}
\label{eq:mmpb}
y_{\alpha} = \int_{\mathbb{C}^n} z^\alpha d\mu ~ , ~~~ \forall \alpha \in \mathbb{N}^n,
\end{equation}
then $\mu$ is the unique atomic measure to satisfy \eqref{eq:mmpb}.
\end{lemma}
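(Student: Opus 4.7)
The plan is to reduce uniqueness to the statement that the only atomic measure $\nu$ on $\CC^{n}$ with all holomorphic moments $\int_{\CC^{n}} \z^{\alpha} d\nu = 0$ ($\alpha \in \NN^{n}$) is the zero measure. First I would set $\nu := \mu - \hat{\mu}$ for a second candidate $\hat{\mu}$; this difference is again an atomic measure, supported on the (finite) union of the supports of $\mu$ and $\hat{\mu}$. Writing $\nu = \sum_{k=1}^{t} c_{k}\, \delta_{\xi_{k}}$ with the $\xi_{k}$ distinct and the $c_{k}\in \CC$, the hypothesis \eqref{eq:mmpb} and linearity give $\int_{\CC^{n}} p\, d\nu = 0$ for every polynomial $p \in \CC[z_{1},\ldots,z_{n}]$.

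The core step is a coordinate-wise Lagrange interpolation argument. For any pair $\xi_{k_{0}} \neq \xi_{k}$ there exists a coordinate $j_{k}$ in which they differ, so the degree-one polynomial
$$
q_{k,k_{0}}(\z) \;:=\; \frac{z_{j_{k}} - (\xi_{k})_{j_{k}}}{(\xi_{k_{0}})_{j_{k}} - (\xi_{k})_{j_{k}}}
$$
vanishes at $\xi_{k}$ and equals $1$ at $\xi_{k_{0}}$. Taking $p_{k_{0}} := \prod_{k \neq k_{0}} q_{k,k_{0}}$ produces a polynomial with $p_{k_{0}}(\xi_{k}) = \delta_{k,k_{0}}$. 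Substituting $p_{k_{0}}$ into the vanishing-moment relation yields $c_{k_{0}} = \sum_{k} c_{k}\, p_{k_{0}}(\xi_{k}) = 0$. Since $k_{0}$ is arbitrary, $\nu \equiv 0$, so $\mu = \hat{\mu}$.

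The only subtlety worth flagging is not a technical obstacle but a conceptual contrast with Proposition \ref{prop-Stones-W}: the earlier statement needed the conjugate monomials $\bar{\z}^{\beta}$ because the complex Stone--Weierstrass theorem requires a conjugation-closed subalgebra in order to approximate arbitrary continuous functions on a compact subset of $\CC^{n}$. In the atomic setting, however, we never need to approximate a general continuous function: it suffices to separate finitely many distinct points of $\CC^{n}$, and this is achieved by ordinary (holomorphic) polynomials alone, without invoking conjugates. This is what makes the reduction in dimension of the moment data possible, and motivates the use of $\mathscr{A}^{2}_{\fc}$ in place of $\mathscr{A}^{1}_{\fc}$.
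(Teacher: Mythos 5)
Your proof is correct, and it takes a genuinely more direct route than the paper's. You apply the Lagrange‐interpolation separation idea once, to the difference measure $\nu=\mu-\hat\mu$, and read off $c_{k_0}=0$ coordinate by coordinate. The paper instead keeps $\mu$ and $\hat\mu$ separate and works through the truncated Hankel matrix $H_k(y)=(y_{\alpha+\beta})_{|\alpha|,|\beta|\le k}$: it first shows (via a lemma essentially identical to your interpolation construction) that this matrix has rank equal to the number of atoms, deducing $r=\hat r$; it then uses a range lemma to identify $\mathrm{span}\{v_k(\xi_i)\}$ with $\mathrm{span}\{v_k(\hat\xi_i)\}$, and argues by contradiction that the atom sets must coincide, before finally matching the weights by linear independence. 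The underlying separating device — affine factors $q_{k,k_0}$ built from a coordinate where two points differ — is exactly the same in both proofs (it appears in the paper as Lemma \ref{lemma:ind}); what differs is the packaging. Your version is shorter and self-contained. The paper's version, though longer, extracts the intermediate facts that the rank of the Hankel matrix stabilizes at the number of atoms and that the column space of $H_k(y)$ determines the atoms, which is precisely the structural information that the Prony/extraction algorithms elsewhere in the paper rely on; so the detour through $H_k(y)$ is not gratuitous but keeps the uniqueness proof in the same vocabulary as the rest of the machinery. Your closing remark about Stone--Weierstrass versus point separation is also exactly the right explanation for why the conjugate moments can be dropped in the atomic case.
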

\begin{proof}
Let us write the measure $\mu$ as
\begin{equation} \mu = \sum_{k=1}^r w_k \delta_{\xi_{k}} \end{equation}
where $d\in \mathbb{N}$, and $w_1,\hdots,w_r \in \mathbb{C}\setminus \{0\}$, and $\xi_{k},\hdots , \xi_{r} \in \mathbb{C}^n$.\\\\
Consider another atomic measure $\hat{\mu}$ that satisfies \eqref{eq:mmpb}, of the form
\begin{equation}
  \hat{\mu} = \sum_{k=1}^{\hat{r}} \hat{w}_k \delta_{\hat{\xi}_{k}}
\end{equation}
where $\hat{r}\in \mathbb{N}$, and $\hat{w}_1,\hdots,\hat{w}_{\hat{r}} \in \mathbb{C}\setminus \{0\}$, and $\hat{\xi}_{1},\hdots , \hat{\xi}_{\hat{r}} \in \mathbb{C}^n$.\\\\
Consider the following truncated Hankel matrix 
\begin{equation}
H_k(y) = ( y_{\alpha+\beta} )_{|\alpha|,|\beta|\leqslant k}
\end{equation}
where $|\alpha| := \alpha_1 + \hdots + \alpha_n$. Thanks to Lemma \ref{lemma:ind}, its rank is equal to $r$ when $k \geqslant r-1$ and it is equal to $\hat{r}$ when $k \geqslant \hat{r}-1$. Thus $r = \hat{r}$. Moreover, when $k \geqslant r = \hat{r}$, Lemma \ref{lemma:range} implies that 
\begin{equation}
\label{eq:span}
\text{span} \{ ~ v_k ( \xi_{1} ) ~ , ~ \hdots ~ , ~ v_k( \xi_{r} ) ~ \} 
~~=~~
\text{span} \{ ~ v_k ( \hat{\xi}_{1} ) ~ , ~ \hdots ~ , ~ v_k( \hat{\xi}_{r} ) ~ \} 
\end{equation}
where $v_k(z) = (z^\alpha)_{|\alpha|\leqslant k}$. \\\\
We now reason by contradiction. Assume that one of the atoms of $\hat{\mu}$, say $\hat{\xi}_{1}$, is distinct from the atoms of $\mu$. Hence $\hat{\xi}_{1},\xi_{1},\hdots , \xi_{r}$ are $r+1$ distinct points of $\mathbb{C}^n$. Lemma \ref{lemma:ind} implies that 
\begin{equation}
v_k ( \hat{\xi}_{1} ) ~,~ v_k ( \xi_{1} ) ~ , ~ \hdots ~ , ~ v_k( \xi_{r} )
\end{equation}
are linearly independent vectors if $k \geqslant r$. This contradicts equation \eqref{eq:span}. The atoms of $\mu$ and $\hat{\mu}$ thus coincide. Their weights satisfy
\begin{equation}
(w_1-\hat{w}_1) v_k ( \xi_{1} ) ~ + ~ \hdots ~ + ~ (w_d-\hat{w}_r) v_k( \xi_{r} ) = 0.
\end{equation}
Again, thanks to Lemma \ref{lemma:ind}, the vectors are linearly independent if $k \geqslant r-1$, thus $w_1-\hat{w}_1 = \hdots = w_d-\hat{w}_r = 0$. This terminates the proof.
\end{proof}

\textbf{Numerical experiments:}

Below, we replicate the experiments of Section \ref{subsec:Methodology for comparison} (with and without noise) but now we make evaluations in $\alpha_1,\hdots,\alpha_n \in \mathbb{N}$ instead of $\alpha_1,\hdots,\alpha_n \in \mathbb{Z}$. 

\begin{table}[h!]̧
\centering
\begin{tabular}{|c|c|c|c||c|}
\hline
Blackbox & Rigorous & Super & Hankel & Advanced\\
Polynomial & LP & Resolution & Prony & H. Prony \\
\hline
$-1.2x^4 + 6.7x^7$ & \hphantom{1}2 (1) & \hphantom{1}3 (2) & \hphantom{11}4 (3) & \hphantom{1}4 \\
\hline
$2.3x^6 + 5.6x^3 -1.5x^2$ & \hphantom{1}4 (3) & \hphantom{1}5 (4) & \hphantom{11}6 (5) & \hphantom{1}6 \\
\hline
$-2.1x^3 + 5.4x^2 -2.0x + 6.2x^5  - 5.2$ & \hphantom{1}5 (4) & \hphantom{1}6 (5) & \hphantom{1}10 (9) & 10 \\
\hline
$0.8x_1x_2 - x_1x_2^2$ & 10 (3) & 15 (4) & \hphantom{1}10 (3) & \hphantom{1}7 \\
\hline
$-5.8x_1^2x_2^2  -8.2x_1^2x_2^3 +5.5x_1^3x_2 + 1.1$ & 10 (3) & 15 (4) & \hphantom{1}21 (5) & 15 \\
\hline
$-7.2x_1x_2^2 +1.8x_1^3x_2^2 + 2.6x_1^4x_2^5 + 6.2x_1x_2^5 + 2.5x_1$ & 10 (3) & 15 (4) & \hphantom{1}21 (5) & 18 \\
\hline
$-3.5 + 8.1x_1^3x_2x_3$ & 10 (2) & 10 (2) & \hphantom{1}20 (3) & 10 \\
\hline
$-1.2x_1^2x_2^2x_3^3 + 7.3x_1^2x_2 - 2.4x_2$ & 20 (3) & 20 (3) & \hphantom{1}20 (3) & 16 \\
\hline
$-6.1x_1^2x_5 +2.5x_2x_4 + 4.8x_3$ & 21 (2) & 21 (2) & \hphantom{1}56 (3) & 28 \\
\hline
$2.9x_2x_3x_9^4x_{10} - 5.6x_1x_4^2x_7 - 4.1x_3x_5x_6^3x_8$ & 66 (2) & 66 (2) & 286 (3) & 58 \\
\hline
\end{tabular}
\caption{Minimum number of evaluations and degrees without noise (evaluations in the points $(e^{\im\alpha_1}, \hdots , e^{\im \alpha_n})$ for $\alpha_1 + \hdots + \alpha_n \leqslant d$ and $\alpha_1, \hdots , \alpha_n \in \mathbb{N}$ for the first three columns)}
\label{tab:musicbis}
 \end{table}

 \begin{table}[h!]
 \centering
 \begin{tabular}{|c|c|c|c|}
 \hline
 Blackbox & Rigorous & Super & Hankel\\
 Polynomial & LP & Resolution & Prony \\
 \hline
 $-1.2x^4 + 6.7x^5$ & \hphantom{1}2.32\% & \hphantom{1}1.66\% & \hphantom{1}0.97\% \\
 \hline
 $2.3x^6 + 5.6x^3 -1.5x^2$ & \hphantom{1}1.71\% & \hphantom{1}2.31\% & \hphantom{1}3.33\% \\
 \hline
 $-2.1x^3 + 5.4x^2 -2.0x + 6.2x^5  - 5.2$ & \hphantom{1}0.80\%  & \hphantom{1}1.64\% & \hphantom{1}2.89\% \\
 \hline
 $0.8x_1x_2 - x_1x_2^2$ & 14.91\% & 11.03\% & 52.14\% \\
 \hline
 $-5.8x_1^2x_2^2  -8.2x_1^2x_2^3 +5.5x_1^3x_2 + 1.1$ & \hphantom{1}0.73\%  & \hphantom{1}1.01\% &  \hphantom{1}2.13\% \\
 \hline
 $-7.2x_1x_2^2 +1.8x_1^3x_2^2 + 2.6x_1^4x_2^5 + 6.2x_1x_2^5 + 2.5x_1$ & \hphantom{1}1.19\% &  12.30\% & \hphantom{1}2.67\% \\
 \hline
 $-3.5 + 8.1x_1^3x_2x_3$ & \hphantom{1}0.82\% & \hphantom{1}1.32\% & \hphantom{1}0.93\% \\
 \hline
 $-1.2x_1^2x_2^2x_3^3 + 7.3x_1^2x_2 - 2.4x_2$ &  \hphantom{1}3.29\% & \hphantom{1}2.13\% & 16.99\% \\
 \hline
 $-6.1x_1^2x_5 +2.5x_2x_4 + 4.8x_3$ & \hphantom{1}2.90\% & \hphantom{1}1.64\% & \hphantom{1}6.74\% \\
 \hline  
 & 107.87\% (1) & 161.36\% (1) & 134.87\% (1) \\
 $2.9x_2x_3x_9^4x_{10} - 5.6x_1x_4^2x_7 - 4.1x_3x_5x_6^3x_8$ & \hphantom{111}N.A. (2) & \hphantom{11}2.12\% (2) & 134.69\% (2) \\
 & \hphantom{111}N.A. (3) & \hphantom{111}N.A. (3) & \hphantom{11}0.57\% (3) \\
 \hline    
 \end{tabular}
 \caption{Relative error in percentage with uniform noise between $-0.1$ and $0.1$ for the real and imaginary parts on the measurements (evaluations in the points $(e^{i\alpha_1}, \hdots , e^{i \alpha_n})$ for $\alpha_1 + \hdots + \alpha_n \leqslant d$ and $\alpha_1, \hdots , \alpha_n \in \mathbb{N}$).}
\label{tab:noisebis}
 \end{table}

In Table \ref{tab:musicbis}, in the column {\em Advanced H. Prony}, the first $r$ exponents $\alpha\in \N^{n}$ are chosen for the monomials indexing the rows and columns of the Hankel matrix, where $r$ is the number of terms in the blackbox polynomial $g$. As in Section \ref{subsec:Methodology for comparison}, for the first three columns of Table \ref{tab:musicbis} and Table \ref{tab:noisebis}, we do not assume anything to be known about the blackbox polynomial expect for the number of variables and an upper bound on the degree (i.e. 10).

In the presence of noise, the optimization step of super-resolution (before the second step of extraction) seems to behave as an efficient filter as it indeed reduces the error compared with Hankel Prony in 7 out of the 9 comparable instances of Table \ref{tab:noisebis}.
However, sometimes, the semidefinite program does not provide a good output. Indeed, in the sixth example, among the ten trials
 there are two trials where the solver runs into numerical issues, which explains the large error of $12.30\%$.

%

\section{Conclusion}

We have addressed the  sparse polynomial interpolation problem
with three different approaches: sparse recovery, super resolution, and Prony's method. 
The common denominator of the three approaches is our view of a polynomial as a signed atomic measure
where the atoms correspond to monomials and the weights to coefficients. Then, on the one hand we can invoke directly results from (discrete) super-resolution
theory \`a la Cand\`es \& Fernandez-Granda \cite{candes_towards_2014} to show that the unknown 
black box polynomial is the unique solution of a certain LP on a measure space and also
the unique solution of finite-dimensional linear program. On the other hand, invoking Kunis et al. \cite{kunis_multivariate_2016}
Prony's method can also be applied. To the best of our knowledge this unifying view of sparse interpolation is new and makes 
the numerical comparison of the three methods very natural. In our preliminary numerical experiments :
\begin{itemize}
\item Prony's method works well and better than expected in the presence of noise. 
\item Super-resolution acts in two steps: a first optimization step and then an extraction procedure applied
to the optimal solution. The latter step is nothing less than Prony's method. We find that this optimization step
sometimes helps significantly in the presence of noise.
\item LP-sparse recovery also works well but its set-up time is quite limiting.
\end{itemize}

\section*{Acknowledgement}
The work of the first two authors was funded by the European Research Council (ERC) under the European Union's
Horizon 2020 research and innovation program (grant agreement 666981 TAMING).

\section*{Appendix}

\begin{lemma} 
\label{lemma:ind}
If $z^{(1)}, \hdots , z^{(d)}$ are distinct points of $\mathbb{C}^n$, then $v_{d-1}(z^{(1)}), \hdots , v_{d-1}(z^{(d)})$ are linearly independent vectors, where $v_d(z) := (z^\alpha)_{|\alpha\leqslant d}$.
\end{lemma}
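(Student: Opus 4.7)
The plan is a classical Lagrange-type interpolation argument, adapted to the multivariate setting. The key observation is that linear independence of the evaluation vectors $v_{d-1}(z^{(i)})$ is equivalent to the surjectivity of the evaluation map $p\mapsto (p(z^{(i)}))_{i=1}^{d}$ restricted to polynomials of total degree $\le d-1$; and surjectivity is exhibited by constructing explicit interpolation polynomials.

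First, I would construct, for each index $i\in\{1,\ldots,d\}$, a polynomial $p_{i}\in\mathbb{C}[z_{1},\ldots,z_{n}]$ of total degree at most $d-1$ satisfying $p_{i}(z^{(j)})=\delta_{ij}$. The construction is as follows: for every $j\neq i$, since $z^{(j)}\neq z^{(i)}$, there exists a coordinate index $k=k(i,j)$ such that $z^{(j)}_{k}\neq z^{(i)}_{k}$. Set
\[
\ell_{ij}(z)\,:=\,\frac{z_{k}-z^{(j)}_{k}}{z^{(i)}_{k}-z^{(j)}_{k}},
\]
which is an affine polynomial in one variable satisfying $\ell_{ij}(z^{(j)})=0$ and $\ell_{ij}(z^{(i)})=1$. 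Then define
\[
p_{i}(z)\,:=\,\prod_{j\neq i}\ell_{ij}(z).
\]
By construction $p_{i}$ has total degree at most $d-1$, $p_{i}(z^{(i)})=1$, and $p_{i}(z^{(j)})=0$ for all $j\neq i$.

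Next, write $p_{i}(z)=\sum_{|\alpha|\le d-1}c_{i,\alpha}z^{\alpha}$ and let $c_{i}$ denote the coefficient vector of $p_{i}$ in the same monomial ordering used to define $v_{d-1}$. Then by definition of $v_{d-1}$,
\[
\langle c_{i},v_{d-1}(z^{(j)})\rangle \,=\, p_{i}(z^{(j)})\,=\,\delta_{ij}.
\]
Now suppose $\sum_{j=1}^{d}\lambda_{j}\,v_{d-1}(z^{(j)})=0$ for some scalars $\lambda_{j}\in\mathbb{C}$. Taking the inner product of both sides with $c_{i}$ yields
\[
0\,=\,\sum_{j=1}^{d}\lambda_{j}\langle c_{i},v_{d-1}(z^{(j)})\rangle\,=\,\sum_{j=1}^{d}\lambda_{j}\delta_{ij}\,=\,\lambda_{i},
\]
so $\lambda_{i}=0$ for every $i$, proving linear independence.

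There is no real obstacle here; the only minor subtlety is ensuring that the Lagrange-type polynomials have total degree at most $d-1$, which is immediate since each $\ell_{ij}$ has degree $1$ and there are $d-1$ factors in the product. The argument works over $\mathbb{C}^{n}$ for any $n\ge 1$ because one is free to choose, for each pair $(i,j)$, a coordinate in which the two points differ.
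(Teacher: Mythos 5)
Your proof is correct and takes essentially the same route as the paper: both construct multivariate Lagrange interpolation polynomials of total degree at most $d-1$ by taking a product, over the other indices, of affine factors in a coordinate where the two points differ, and then pair each such polynomial's coefficient vector against the putative linear dependence relation to kill all but one coefficient. One small caution: the pairing $\langle c_{i},v_{d-1}(z^{(j)})\rangle$ must be the bilinear (not Hermitian) pairing $\sum_{\alpha}c_{i,\alpha}(z^{(j)})^{\alpha}$ for the identity $\langle c_{i},v_{d-1}(z^{(j)})\rangle=p_{i}(z^{(j)})$ to hold over $\mathbb{C}$ --- the paper sidesteps this by simply multiplying the dependence relation by $L^{(l)}_{\alpha}$ and summing over $\alpha$.
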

\begin{proof}
Consider some complex numbers $c_1, \hdots, c_d$ such that 
\begin{equation}
\label{eq:dependency}
\sum_{k=1}^d c_k (z^{(k)})^\alpha = 0 ~,~~~ \forall |\alpha|\leqslant d-1.
\end{equation}
Given $1\leqslant l \leqslant d$, define the Lagrange interpolation polynomial 
\begin{equation}
L^{(l)}(z) := \prod_{\scriptsize \begin{array}{c} 1 \leqslant k \leqslant d \\ k \neq l \end{array}}
\frac{ z_{i(k)} - z^{(k)}_{i(k)} }{ z^{(l)}_{i(k)} - z^{(k)}_{i(k)} } 
\end{equation}
 where $i(k) \in \{1,\hdots, n \}$ is an index such that $z^{(k)}_{i(k)} \neq z^{(l)}_{i(k)}$. It satisfies $L^{(l)}(z^{(k)}) = 1$ if $k = l$ and $L^{(l)}(z^{(k)}) = 0$ if $k \neq l$. The degree of $L^{(l)}(z) =: \sum_{\alpha} L^{(l)}_\alpha z^\alpha$ is equal to $d - 1$. Thus we may multiply the equation in \eqref{eq:dependency} by $L^{(l)}_\alpha$ to obtain
\begin{equation}
\sum_{k=1}^d c_k ~ L^{(l)}_\alpha (z^{(k)})^\alpha = 0 ~,~~~ \forall |\alpha|\leqslant d-1.
\end{equation}
Summing over all $|\alpha|\leqslant d-1$ yields $\sum_{k=1}^d c_k ~ L^{(l)}(z^{(k)}) = c_l = 0$. 
\end{proof}

\begin{lemma}
\label{lemma:range}
\normalfont
\textit{If $u_1,\hdots,u_d \in \mathbb{C}^n$ are linearly independent, and $c_1,\hdots,c_d \in \mathbb{C}\setminus \{0\}$, then} $\mathcal{R}(\sum_{i=1}^d c_i u_i u_i^T) = \mathcal{R}(\sum_{i=1}^d c_i u_i u_i^*) = \text{span} \{u_1,\hdots,u_d\}$ where $\mathcal{R}$ denotes the range.
\end{lemma}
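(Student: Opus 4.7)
The plan is to rewrite the sum as a matrix product and then analyze its kernel and range using rank-nullity. I will present both cases ($u_iu_i^T$ and $u_iu_i^*$) in parallel, since the arguments are identical modulo replacing transpose by conjugate transpose.

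First I would set $U = [u_1 \mid \cdots \mid u_d] \in \mathbb{C}^{n \times d}$, $D = \mathrm{diag}(c_1,\ldots,c_d) \in \mathbb{C}^{d \times d}$, and observe that
\[
M \;:=\; \sum_{i=1}^{d} c_i\, u_i u_i^T \;=\; U D U^T, \qquad \widetilde M \;:=\; \sum_{i=1}^{d} c_i\, u_i u_i^* \;=\; U D U^*.
\]
Because the $u_i$ are linearly independent in $\mathbb{C}^n$, $U$ has rank $d$, so the map $U : \mathbb{C}^d \to \mathbb{C}^n$ is injective, i.e.\ $\ker U = \{0\}$. Also $D$ is invertible since every $c_i$ is nonzero. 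The inclusion $\mathcal{R}(M) \subseteq \mathcal{R}(U) = \mathrm{span}\{u_1,\ldots,u_d\}$ is immediate from $Mv = U(DU^T v)$, and likewise for $\widetilde M$.

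Next I would compute the kernel. If $Mv = 0$ then $U(DU^T v) = 0$, and injectivity of $U$ gives $DU^T v = 0$, whence $U^T v = 0$ because $D$ is invertible. Conversely $U^T v = 0 \Rightarrow Mv = 0$. Therefore $\ker M = \ker U^T$, and analogously $\ker \widetilde M = \ker U^*$.

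Finally I would apply rank-nullity on $\mathbb{C}^n$:
\[
\dim \mathcal{R}(M) \;=\; n - \dim \ker U^T \;=\; \mathrm{rank}\, U^T \;=\; \mathrm{rank}\, U \;=\; d,
\]
and similarly $\dim \mathcal{R}(\widetilde M) = \mathrm{rank}\, U^* = d$. Since $\mathcal{R}(M), \mathcal{R}(\widetilde M) \subseteq \mathrm{span}\{u_1,\ldots,u_d\}$ and both sides have dimension $d$, the inclusions are equalities, which is the claim.

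The only mildly delicate point is that one cannot argue by showing $U^T U$ is invertible: over $\mathbb{C}$, $U^T U$ may well be singular even when $U$ has full column rank (take $u_1 = (1,\mathrm{i})^T$, so that $u_1^T u_1 = 0$). Going through rank-nullity on the kernels $\ker U^T$ and $\ker U^*$ sidesteps this issue cleanly, since only injectivity of $U$ and invertibility of $D$ are used.
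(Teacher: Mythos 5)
Your proof is correct. It takes a closely related but slightly different route from the paper's: the paper shows the inclusion $\mathrm{span}\{u_1,\ldots,u_d\}\subseteq\mathcal{R}(M)$ directly by fixing a target $\sum_i\lambda_i u_i$, reducing (via linear independence of the $u_i$) the equation $\sum_i\lambda_i u_i = Mz$ to the linear system $\lambda = (c_1u_1\ \cdots\ c_du_d)^T z$, and then invoking surjectivity of the transpose of a rank-$d$ matrix to produce $z$. You instead compute $\ker M = \ker U^T$ and run rank--nullity on $M$ to get $\dim\mathcal{R}(M)=d$, then conclude by a dimension count against the easy inclusion. Both proofs hinge on the same fact — $U$, and hence $UD$, has full column rank $d$ — so neither is deeper than the other; your factored form $M=UDU^T$ makes the structure cleaner and your dimension count is a bit more economical, while the paper's version is more hands-on in exhibiting preimages. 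Your closing remark that one cannot argue via invertibility of $U^TU$ over $\mathbb{C}$ (e.g.\ $u_1=(1,\mathrm{i})^T$) is a good catch of a genuine pitfall and worth keeping.
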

\begin{proof}
If $z \in \mathbb{C}^n$, then $(\sum_{i=1}^d c_i u_i u_i^T)z = \sum_{i=1}^d (c_i  u_i^T z) u_i \in \text{span} \{u_1,\hdots,u_d\}$ and $(\sum_{i=1}^d c_i u_i u_i^*)z = \sum_{i=1}^d (c_i  u_i^* z) u_i \in \text{span} \{u_1,\hdots,u_d\}$. Conversly, an element of the span $\sum_{i=1}^d \lambda_i u_i$ with $\lambda_1,\hdots,\lambda_n \in \mathbb{C}$ belongs to the the range of $\sum_{i=1}^d c_i u_i u_i^T$ if there exists $z\in\mathbb{C}^n$ such that 
$$ \sum_{i=1}^d \lambda_i u_i = \left( \sum_{i=1}^d c_i  u_i u_i^T \right) z  $$ 
which is equivalent to each of the next three lines:
\begin{equation} \sum_{i=1}^d [ \lambda_i - (c_i  u_i^T z) ] u_i = 0, \end{equation}
\begin{equation}  \lambda_i = (c_i  u_i)^T z ~,~ i=1,\hdots,d,\end{equation}
\begin{equation} \lambda =  ( c_1 u_1 \hdots c_d u_d )^T z. \end{equation}
Since $( c_1 u_1 \hdots c_d u_d ) \in \mathbb{C}^{n \times d}$ has rank $d$, its transpose has rank $d$. Thus there exists a desired $z \in \mathbb{C}^n$. Likewise, $\sum_{i=1}^d \lambda_i u_i$ belongs to the the range of $\sum_{i=1}^d c_i u_i u_i^*$ if there exists $z\in\mathbb{C}^n$ such that 
$$  \lambda_i = (c_i  u_i)^* z ~,~ i=1,\hdots,d. $$
Since $( c_1 u_1 \hdots c_d u_d ) \in \mathbb{C}^{n \times p}$ has rank $d$, its conjugate transpose has rank $d$. Thus there exists a desired $z \in \mathbb{C}^n$.
\end{proof}




\end{document}